\newtheorem{theorem}{Theorem}[section]
\newtheorem{property}{Property}
\newtheorem{assumption}{Assumption}
\journal{Elsevier}
\begin{document}

\begin{frontmatter}

\title{A low-rank Lie-Trotter splitting approach for nonlinear fractional complex Ginzburg-Landau equations}

\author[address1]{Yong-Liang Zhao}
\ead{ylzhaofde@sina.com}

\author[address2]{Alexander Ostermann}

\ead{alexander.ostermann@uibk.ac.at}

\author[address3]{Xian-Ming Gu\corref{correspondingauthor}}
\cortext[correspondingauthor]{Corresponding author}
\ead{guxianming@live.cn}

\address[address1] {School of Mathematical Sciences, \\
University of Electronic Science and Technology of China, \\
Chengdu, Sichuan 611731, P.R. China}
\address[address2]{Department of Mathematics, University of Innsbruck, Technikerstra{\ss}e 13, Innsbruck 6020, Austria}
\address[address3] {School of Economic Mathematics/Institute of Mathematics, \\
Southwestern University of Finance and Economics, Chengdu, Sichuan 611130, P.R. China}

\begin{abstract}
Fractional Ginzburg-Landau equations as the generalization of the classical one
have been used to describe various physical phenomena.
In this paper, we propose a numerical integration method for solving 
space fractional Ginzburg-Landau equations based on a dynamical low-rank approximation.
We first approximate the space fractional derivatives by using a fractional centered difference method.
Then, the resulting matrix differential equation is split into a stiff linear part and a nonstiff (nonlinear) one.
For solving these two subproblems, a dynamical low-rank approach is used.
The convergence of our method is proved rigorously.
Numerical examples are reported which show that the proposed method is robust and accurate. 
\end{abstract}

\begin{keyword}
Dynamical low-rank approximation\sep Low-rank splitting\sep Numerical integration methods\sep Fractional Ginzburg-Landau equations
\end{keyword}

\end{frontmatter}


\section{Introduction}
\label{sec1}

In physics, the classical complex Ginzburg-Landau model, originally derived by Newell and Whitehead \cite{newell1969finite, lange1974stability},
describes the amplitude evolution of waves in dissipative systems of fluid mechanics close to instabilities.
This model has remarkable success in describing numerous phenomena including
B\'{e}nard convection \cite{newell1969finite, segel1969distant}, plane Poiseuille flow \cite{stewartson1971non} 
and superconductivity \cite{du1992analysis, chapman1992macroscopic}.

Tarasov and Zaslavsky \cite{tarasov2005fractional, tarasov2006fractional} first proposed a fractional generalization of the Ginzburg-Landau equation for fractal media.
Later, fractional Ginzburg-Landau equations (FGLEs) have been used to describe various physical phenomena, 
refer e.g., to \cite{milovanov2005fractional, mvogo2016localized, tarasov2006psi}.
Many properties about FGLEs such as well-posedness, long-time dynamics and asymptotic behavior were studied;
see \cite{pu2013well, guo2013well, lu2013asymptotic, millot2015fractional} and references therein.
However, it is usually not possible to obtain analytical solutions of FGLEs, due to the non-local property of fractional derivatives.
Thus, the development of efficient and reliable numerical techniques for solving FGLEs attracts many researchers. 
Wang and Huang \cite{wang2016implicit} proposed an implicit midpoint difference scheme for FGLEs.
Hao and Sun \cite{hao2017linearized} proposed a linearized high-order difference scheme for the one-dimensional complex FGLE.
Li and Huang \cite{li2019efficient} developed an implicit difference scheme for solving the coupled nonlinear FGLEs.
In \cite{zhang2020exponential}, the authors used an exponential Runge-Kutta method to solve the two-dimensional (2D) FGLE.
Other related work can be found in \cite{zhang2019linearized, li2017galerkin, wang2018efficient, he2018unconditionally, zhang2020linearized} and references therein.

To the best of our knowledge, there are few numerical methods for solving FGLEs by using a dynamical low-rank approximation has been considered.
Thus, in this paper, we mainly study a dynamical low-rank approximation \cite{koch2007dynamical} for solving the following 2D complex FGLE:
\begin{equation}
\begin{cases}
\partial_t u - (\nu + \mathrm{\mathbf{i}} \eta) (\partial_x^\alpha + \partial_y^\beta) u 
+ (\kappa + \mathrm{\mathbf{i}} \xi) |u|^2 u - \gamma u = 0, & (x,y,t) \in \Omega \times (0, T],\\
u(x,y,0) = u_0(x,y), & (x,y) \in \bar{\Omega} = \Omega \cup \partial \Omega, \\
u(x,y,t) = 0, & (x,y) \in \partial \Omega,
\end{cases}
\label{eq1.1}
\end{equation}
where $\mathrm{\mathbf{i}} = \sqrt{-1}$, $\nu > 0,~\kappa > 0,~\eta,~\xi$ are real numbers, $1 < \alpha,~\beta <2$,
$\Omega = (x_L, x_R) \times (y_L, y_R) \subset \mathbb{R}^2$, $u_0(x,y)$ is a given complex function,
$\partial \Omega$ is the boundary of $\Omega$, and
$\gamma \in \mathbb{R}$ is the coefficient of the linear evolution term.
$\partial_x^\alpha$ and $\partial_y^\beta$ denote the Riesz fractional derivatives \cite{gorenflo1998random} 
in the $x$ and $y$ directions, respectively. 
More precisely, they are defined as follows:
\begin{equation*}
\begin{split}
& \partial_x^\alpha u(x,y,t) = -\frac{1}{2 \cos(\alpha \pi/2) \Gamma(2 - \alpha)} \frac{\partial^2}{\partial x^2}
\int_{- \infty}^{\infty} \left|x - \zeta \right|^{1 - \alpha} u(\zeta,y,t) d \zeta, \\
& \partial_y^\beta u(x,y,t) = -\frac{1}{2 \cos(\beta \pi/2) \Gamma(2 - \beta)} \frac{\partial^2}{\partial y^2}
\int_{- \infty}^{\infty} \left|y - \zeta \right|^{1 - \beta} u(x,\zeta,t) d \zeta.
\end{split}
\end{equation*}
Moreover, if $\nu = \kappa = \gamma = 0$, Eq.~\eqref{eq1.1} becomes a nonlinear fractional Schr{\"o}dinger equation \cite{wang2015energy}.
Some numerical methods for such equations can be found in \cite{zhao2014fourth, wang2014linearly, zhai2019error, li2020relaxation}.

Dynamical low-rank approximation was first studied by Koch and Lubich in \cite{koch2007dynamical}.
It is a differential equation-based approach and mainly follows the idea of the Dirac-Frenkel variational principle
\cite{dirac1930principles, lubich2008quantum}.
The aim of this approach is to find low-rank approximations to time-dependent large data matrices or to solutions of large matrix differential equations.
This dynamical low-rank approach yields a differential equation for the approximation matrix on the low-rank manifold.
Nonnenmacher and Lubich \cite{nonnenmacher2008dynamical} first used this approach in different numerical applications,
such as the compression of series of images and a blow-up problem of a reaction-diffusion equation.
Lubich and Oseledets \cite{lubich2014projector} proposed and analyzed an inexpensive integrator (called projector-splitting integrator)
to numerically solve the differential equation arising from the dynamical low-rank approximation.
Later, Kieri et al.~\cite{kieri2016discretized} provided a comprehensive error analysis for the integration method.
Note that their error bounds depend on the Lipschitz constant of the right-hand side of the considered matrix differential equation.
Later, Ostermann et al.~\cite{ostermann2019convergence} proposed a integration method that yields low-rank approximations for stiff matrix differential equations.
The key idea of their method is to first split the equation into stiff and nonstiff parts, and then to follow the dynamical low-rank approach.
Their error analysis shows that the approach is independent of the stiffness and robust with respect to possibly small singular values in the approximation matrix.
More dynamical low-rank based discretizations of high-dimensional time-dependent problems can be found in
\cite{koch2010dynamical, lubich2013dynamical, einkemmer2018low, einkemmer2019quasi, einkemmer2019low, grasedyck2013literature}.

The rest of this paper is organized as follows.
In Section \ref{sec2}, we propose our low-rank approximation for solving Eq.~\eqref{eq1.1}.
The error analysis of our method is given in Section \ref{sec3}.
Two numerical examples provided in Section \ref{sec5} strongly support the theoretical results.
Concluding remarks are given in Section \ref{sec6}.
\section{A low-rank approximation of the 2D complex FGLE}
\label{sec2}

In this section, our low-rank approximation for solving Eq.~\eqref{eq1.1} is derived.
Such an approximation is usually based on a matrix differential equation.
Thus, Eq.~\eqref{eq1.1} is first discretized in space by an appropriate difference method and the resulting equations are then reformulated as a matrix differential equation.

\subsection{The matrix differential equation}
\label{sec2.1}

For two given positive integers $N_x$ and $N_y$, let $h_x = \frac{x_R - x_L}{N_x}$ and $h_y = \frac{y_R - y_L}{N_y}$.
Then, the space can be discretized by $\Omega_h = \left\{ (x_i,y_j) \mid 0 \leq i \leq N_x, 0 \leq j \leq N_y \right\}$.
Let $u_{i j}(t)$ be the numerical approximation of $u(x_i, y_j, t)$. 
For approximating $\partial_x^\alpha u$ and $\partial_y^\beta u$ in Eq.~\eqref{eq1.1},
we choose the second-order fractional centered difference method proposed in \cite{ccelik2012crank}.
Then, we have 
\begin{equation}
\partial_x^\alpha u(x_i, y_j,t) =  
-h_{x}^{-\alpha} \sum_{k = -N_x + i}^{i} g_{k}^{\alpha} u_{i - k, j}(t) + \mathcal{O}(h_x^2)
= \delta_{x}^{\alpha} u_{i j}(t) + \mathcal{O}(h_x^2)
\label{eq2.0a}
\end{equation}
and
\begin{equation}
\partial_y^\beta u(x_i, y_j,t) =  
-h_{y}^{-\beta} \sum_{k = -N_y + j}^{j} g_{k}^{\beta} u_{i, j - k}(t) + \mathcal{O}(h_y^2)
= \delta_{y}^{\beta} u_{i j}(t) + \mathcal{O}(h_y^2),
\label{eq2.0b}
\end{equation}
where 
\begin{equation*}
g_{k}^{\mu} = \frac{(-1)^k \Gamma(1 + \mu)}{\Gamma(\mu/2 - k + 1) \Gamma(\mu/2 + k + 1)}
\quad \mathrm{with} \quad \mu = \alpha, ~\beta \quad \mathrm{and} \quad k \in \mathbb{Z}.
\end{equation*}
With these notations, our semi-discrete scheme is given as
\begin{equation*}
\frac{d u_{i j}(t)}{d t} = (\nu + \mathrm{\mathbf{i}} \eta) \left( \delta_{x}^{\alpha} + \delta_{y}^{\beta} \right) u_{i j}(t)
- (\kappa + \mathrm{\mathbf{i}} \xi) \left| u_{i j}(t) \right|^2 u_{i j}(t) + \gamma u_{i j}(t) .
\end{equation*}
Rewriting it into matrix form, we obtain the following matrix differential equation
\begin{equation}
\dot{U}(t) = A_x U(t) + U(t) A_y - (\kappa + \mathrm{\mathbf{i}} \xi) \left| U(t) \right|^2 U(t) + \gamma U(t), \quad
U(0) = U^0,
\label{eq2.1}
\end{equation}
where $U(t) = \left[ u_{i j}(t) \right]_{\substack{1 \leq i \leq N_x - 1 \\ 1 \leq j \leq N_y - 1}}$,
$U^0 = \left[ u_{0}(x_i,y_j) \right]_{\substack{1 \leq i \leq N_x - 1 \\ 1 \leq j \leq N_y - 1}}$,
$\dot{U}(t) = \left[ \frac{d u_{i j}(t)}{d t} \right]_{\substack{1 \leq i \leq N_x - 1 \\ 1 \leq j \leq N_y - 1}}$
is the first-order derivative of $U(t)$ with respect to $t$.
Here, $A_x$ and $A_y$ are two symmetric Toeplitz matrices \cite{zhang2020exponential} with first columns given by
\begin{equation*}
-\frac{\nu + \mathrm{\mathbf{i}} \eta}{h_{x}^{\alpha}} \left[g_{0}^{\alpha},g_{1}^{\alpha}, \cdots, g_{N_x - 2}^{\alpha} \right]^T
\quad \mathrm{and} \quad
-\frac{\nu + \mathrm{\mathbf{i}} \eta}{h_{y}^{\beta}} \left[g_{0}^{\beta},g_{1}^{\beta}, \cdots, g_{N_y - 2}^{\beta} \right]^T,
\quad \mathrm{respectively}.
\end{equation*}

\subsection{The full-rank Lie-Trotter splitting method}
\label{sec2.2}

The dynamical low-rank approach \cite{koch2007dynamical} can be used to solve Eq.~\eqref{eq2.1} directly,
but it will suffer from the (local) Lipschitz condition of the right-hand side of Eq.~\eqref{eq2.1}.
To overcome this limitation, an alternative way is considered in this work.
More precisely, we first split \eqref{eq2.1} into a stiff linear part and a nonstiff (nonlinear) part.
Then, we apply the dynamical low-rank approximation to solve them separately and combine the solutions with the Lie-Trotter scheme.
This strategy implies that our numerical method only requires a small (local) Lipschitz constant of the nonlinear part, but not of the whole right-hand side of \eqref{eq2.1}.

For a positive integer $M$, let $\tau = \frac{T}{M}$ denote the time step size and $t_k = k \tau~(k = 0, 1, \cdots, M)$.
We split Eq.~\eqref{eq2.1} into the following two subproblems
\begin{equation}
\dot{U}_1 (t) = A_x U_1 (t) + U_1 (t) A_y, \quad U_1(t_0) = U_1^0,
\label{eq2.2}
\end{equation}
and
\begin{equation}
\dot{U}_2 (t) = G(U_2(t)) \triangleq - (\kappa + \mathrm{\mathbf{i}} \xi) \left| U_2 (t) \right|^2 U_2 (t) + \gamma U_2 (t),
\quad U_2(t_0) = U_2^0.
\label{eq2.3}
\end{equation}
Denote by $\Phi_{\tau}^{L}(U_1^0)$ and $\Phi_{\tau}^{G}(U_2^0)$ the solutions of the subproblems \eqref{eq2.2} and \eqref{eq2.3} 
at $t_1$ with initial values $U_1^0$ and $U_2^0$, respectively.
Then, the full-rank Lie-Trotter splitting scheme with time step size $\tau$ is given by
\begin{equation}
\mathcal{L}_{\tau} = \Phi_{\tau}^{L} \circ \Phi_{\tau}^{G}.
\label{eq2.3a}
\end{equation}
Starting with $U_2^0 = U^0$,
the numerical solution $U^1$ of Eq.~\eqref{eq1.1} at $t = t_1$
is thus given by
\begin{equation*}
U^1 = \mathcal{L}_{\tau} (U^0) = \Phi_{\tau}^{L} \circ \Phi_{\tau}^{G} (U^0).
\end{equation*}
Subsequently, the numerical solution of Eq.~\eqref{eq1.1} at $t_k$ is $U^k = \mathcal{L}_{\tau}^{k} (U^0)$.
It is worth noting that the exact solution of \eqref{eq2.2} at $t_1$ can be expressed as
\begin{equation*}
U_{1}(t_{1}) = e^{\tau A_x} U_1^0 e^{\tau A_y}.
\end{equation*}
For large time step sizes $\tau$, this can also be computed efficiently,
e.g. by Taylor interpolation \cite{al2011computing},
the Leja method \cite{caliari2016leja} or Krylov subspace methods \cite{saad1992analysis, lee2010shift}.

The numerical solution $U^1$ is a full rank approximation of $U(t_1)$.
In the next subsection, a low-rank approximation of $U(t)$ is derived.

\subsection{The low-rank approximation}
\label{sec2.3}

Let $\mathcal{M}_{r} = \left\{ X(t) \in \mathbb{C}^{(N_x - 1) \times (N_y - 1)} \mid \mathrm{rank} \left( X(t) \right) = r\right\}$
be the manifold of rank-$r$ matrices.
The aim of this paper is to find a low-rank approximation $X(t) \in \mathcal{M}_{r}$ for the solution of Eq.~\eqref{eq1.1}.
In Section \ref{sec2.2}, we obtained the full rank numerical solution of \eqref{eq1.1}.
Now, we seek after low-rank approximations $X_1(t), X_2(t) \in \mathcal{M}_{r}$ to $U_1(t)$ and $U_2(t)$, respectively.

We start with subproblem \eqref{eq2.2}, which is the easier one.
It can be observed that for any $X \in \mathcal{M}_{r}$,
$A_x X + X A_y \in \mathcal{T}_{X} \mathcal{M}_{r}$, where $\mathcal{T}_{X} \mathcal{M}_{r}$
is the tangent space of $\mathcal{M}_{r}$ at a rank-$r$ matrix $X$.
This implies that \eqref{eq2.2} is rank preserving \cite{helmke1996optimization}.
More precisely, for a given rank-$r$ initial value $X_{1}^{0}$, the solution of
\begin{equation}
\dot{X}_1 (t) = A_x X_1 (t) + X_1 (t) A_y, \quad X_1(t_0) = X_1^0
\label{eq2.4}
\end{equation}
remains rank-$r$ for all $t$.

For the low-rank discretization of subproblem \eqref{eq2.3}, the dynamical low-rank approximation technique \cite{koch2007dynamical} is employed.
The key idea of this technique is to solve the following optimization problem
\begin{equation*}
\min_{X_2(t) \in \mathcal{M}_{r}} \left\| \dot{X}_2 (t) - \dot{U}_2 (t) \right\|, \quad \mathrm{s.t.}~ \dot{X}_2 (t) \in \mathcal{T}_{X_2(t)} \mathcal{M}_{r},
\end{equation*}
where $\mathcal{T}_{X_2(t)} \mathcal{M}_{r}$ is the tangent space of $\mathcal{M}_{r}$ at the current approximation $X_2 (t)$.
Then, the rank-$r$ solution of \eqref{eq2.3} can be obtained by solving the following evolution equation
\begin{equation}
\dot{X}_2 (t) = P(X_2 (t)) G(X_2 (t)), \quad X_2 (t_0) = X_2^0 \in \mathcal{M}_{r},
\label{eq2.5}
\end{equation}
where $P(X_2 (t))$ is the orthogonal projection onto $\mathcal{T}_{X_2(t)} \mathcal{M}_{r}$.
Let the low-rank approximation of $U_2(t)$ at $t_1$ be $X_2^1 = \tilde{\Phi}_{\tau}^{G} (X_2^0)$.
Then, our low-rank Lie-Trotter splitting procedure is given by
\begin{equation}
\mathcal{L}_{\tau, r} = \Phi_{\tau}^{L} \circ \tilde{\Phi}_{\tau}^{G}.
\label{eq2.6}
\end{equation}
Let $X^0$ be a rank-$r$ approximation of the initial value $U^0$. 
We start with $X_{2}^{0} = X^0$
and obtain the rank-$r$ approximation $X^1$ of the solution of \eqref{eq1.1} at $t_1$ as
\begin{equation}
X^1 = \mathcal{L}_{\tau, r} (X^0) = \Phi_{\tau}^{L} \circ \tilde{\Phi}_{\tau}^{G} (X^0).
\label{eq2.7}
\end{equation}
Consequently, the low-rank solution of \eqref{eq1.1} at $t_k$ is $X^k = \mathcal{L}_{\tau, r}^k (X^0)$.

The remaining step is how to solve \eqref{eq2.5}.
We prefer to use the projector-splitting integrator method \cite{lubich2014projector} to solve it numerically.
This method is more robust than the standard numerical integrators (e.g. explicit and implicit Runge-Kutta methods) under over-approximation with a too high rank.
Kieri et al.~\cite{kieri2016discretized} proved that its robustness with respect to small singular values.
This is a crucial property since the rank in most applications is unknown in advance.
In the following subsection, we briefly explain how to use the projector-splitting integrator method to solve Eq.~\eqref{eq2.5}.

\subsection{The projector-splitting integrator}
\label{sec2.4}

According to \cite{lubich2014projector}, every rank-$r$ matrix $X_{2}(t) \in \mathbb{C}^{(N_x - 1) \times (N_y - 1)}$ can be expressed as
$X_{2}(t) = S(t) \Sigma(t) V(t)^{*}$, where $S(t) \in \mathbb{C}^{(N_x - 1) \times r}$ and $V(t) \in \mathbb{C}^{ (N_y - 1) \times r}$ have orthonormal columns,
$\Sigma(t) \in \mathbb{C}^{r \times r}$ is nonsingular and has the same singular values as $X_{2}(t)$, and ${}^{*}$ means conjugate transpose.
This expression is similar to the singular value decomposition (SVD), but $\Sigma(t)$ is not necessarily a diagonal matrix.
Furthermore, the storage requirements and the computation cost are significant reduced if $r$ is very small (i.e., $r \ll \min \{ N_x - 1, N_y - 1 \}$).

From Lemma 4.1 in \cite{nonnenmacher2008dynamical}, the orthogonal projection $P(X_2(t))$ at the current approximation matrix
$X_{2}(t) = S(t) \Sigma(t) V(t)^{*} \in \mathcal{M}_r$ has the following representation
\begin{equation}
\begin{split}
P(X_2(t)) G(X_2(t)) & = S(t) S(t)^{*} G(X_2(t)) - S(t) S(t)^{*} G(X_2(t)) V(t) V(t)^{*}  + G(X_2(t)) V(t) V(t)^{*} \\
& \triangleq P_1 (X_2(t)) G(X_2(t)) - P_1 (X_2(t)) G(X_2(t)) P_2 (X_2(t)) + G(X_2(t)) P_2 (X_2(t)).
\end{split}
\label{eq2.8}
\end{equation}
Here, $P_1 (X_2(t))$ and $P_2 (X_2(t))$ are the orthogonal projections onto the spaces spanned by the range and the corange of $X_2(t)$, respectively.
With this at hand, the low-rank solution of Eq.~\eqref{eq2.5} at $t_1$ can be obtained by solving the evolution equations:
\begin{equation*}
\begin{split}
& \dot{X}_{2}^{I}(t) = P_1 (X_2(t)) G(X_2(t)), \quad X_{2}^{I}(t_0) = X_{2}^{0},\\
& \dot{X}_{2}^{II}(t) = - P_1 (X_2(t)) G(X_2(t)) P_2 (X_2(t)), \quad X_{2}^{II}(t_0) = X_{2}^{I}(t_1),\\
& \dot{X}_{2}^{III}(t) = G(X_2(t)) P_2 (X_2(t)), \quad X_{2}^{III}(t_0) = X_{2}^{II}(t_1).
\end{split}
\end{equation*}
Then, $X_{2}^{III}(t_1)$ is the approximate solution of $X_{2}(t_1)$.
For solving the above three matrix differential equations, a fourth-order Runge-Kutta method is used.
High-order numerical methods for solving \eqref{eq2.6} can be found in \cite{lubich2014projector}.
\section{Convergence analysis}
\label{sec3}

The framework of our low-rank approach has been proposed, but its convergence is still not proved.
This is our next aim.
\subsection{Preliminaries for the convergence analysis}
\label{sec3.1}

In this subsection, some notations and assumptions are introduced for formulating the convergence result.
We consider the Hilbert space $\mathbb{C}^{(N_x - 1) \times (N_y - 1)}$ endowed with the Frobenius norm $\| \cdot \|_{F}$.
Let $X^0$ be a given rank-$r$ approximation of the initial value $U^0$ satisfying
$\left\| X^0 - U^0 \right\|_{F} \leq \sigma$, for some $\sigma \geq 0$. The exact rank-$r$ solution of Eq.~\eqref{eq2.1}
is given as
\begin{equation*}
X(t) = e^{(t - t_0) A_x} X^0 e^{(t - t_0) A_y}
+ \int_{t_0}^{t} e^{(t - s) A_x} P(X (s)) G(X (s)) e^{(t - s) A_y} ds, \quad t_0 \leq t \leq T.
\end{equation*}

The following property and assumption are needed in our convergence analysis.
\begin{assumption}
We assume that
\begin{itemize}
\item[(a)] {$G$ is continuously differentiable in a neighborhood of the exact solution,
and the solution of Eq.~\eqref{eq1.1} is bounded, i.e. $| u(x, y, t) | \leq \delta$,
$(x,y,t) \in \Omega \times (0, T]$, for some $\delta > 0$;}
\item[(b)] {there exists $\varepsilon > 0$ such that
\begin{equation*}
G(X(t)) = \tilde{B}(X(t)) + R(X(t)) \quad \textrm{for}~~t_0 \leq t \leq T,
\end{equation*}
where $\tilde{B}(X(t)) \in \mathcal{T}_{X(t)} \mathcal{M}_{r}$ and $\left\| R(X(t)) \right\|_{F} \leq \varepsilon$.}
\item[(c)] {The exact solution of \eqref{eq1.1} is sufficiently smooth such that the fractional central difference method (Eqs.~\eqref{eq2.0a} and \eqref{eq2.0b}) is second-order accurate; see also \cite{zhang2020exponential,sun2016some}.}
\end{itemize}
\label{assumption1}
\end{assumption}

\begin{property}
\begin{itemize}
\item[(a)] {There exists $C_1 > 0$ such that $A_x$ and $A_y$ satisfy
\begin{equation}
\left\| e^{t A_x} Z e^{t A_y} \right\|_{F} \leq \left\| Z \right\|_{F},
\label{eq3.1}
\end{equation}
\begin{equation}
\left\| e^{t A_x} (A_x Z + Z A_y) e^{t A_y} \right\|_{F} \leq \frac{C_1}{t} \left\| Z \right\|_{F}
\label{eq3.2}
\end{equation}
for all $t > 0$ and $Z \in \mathbb{C}^{(N_x - 1) \times (N_y - 1)}$.}
\item[(b)] {Under Assumption 1(a), the function $G$ is locally Lipschitz continuous and bounded in
a neighborhood of the solution $U(t)$. That is to say, for $\left\| \hat{U} - U(t) \right\|_{F} \leq \tilde{\xi}$,
$\left\| \tilde{U} - U(t) \right\|_{F} \leq \tilde{\xi}$ and $\left\| \bar{U} - U(t) \right\|_{F} \leq \tilde{\xi}$ ($\tilde{\xi} > 0$, $t_0 \leq t \leq T$),
one obtains
\begin{equation}
\left\| G(\hat{U}) - G(\tilde{U}) \right\|_{F} \leq L \left\| \hat{U} - \tilde{U} \right\|_{F}, \quad
\left\| G(\bar{U}) \right\|_{F} \leq H,
\label{eq3.3}
\end{equation}
where the constants $L$ and $H$ depend on $\delta$ and $\tilde{\xi}$.}
\end{itemize}
\label{property1}
\end{property}
\begin{proof}
Let $I_x$ and $I_y$ be the identity matrices with sizes  $N_x - 1$ and $N_y - 1$, respectively.
Let $\otimes$ denote the Kronecker product and $\textrm{vec}(\cdot)$ the columnwise vectorization of a matrix into a column vector. According to \cite[Theorem 3.1]{zhang2020exponential}, for all $t \geq 0$ and all $\bm{z} = \textrm{vec}(Z) \in \mathbb{C}^{(N_x - 1) (N_y - 1)}$,
there exists $C_1 > 0$ such that
\begin{equation*}
\left\| e^{t A} \bm{z} \right\|_2 \leq \| \bm{z} \|_2, \quad
\left\| e^{t A} A \bm{z} \right\|_2 \leq \frac{C_1}{t} \| \bm{z} \|_2,
\end{equation*}
where $A = I_y \otimes A_x + A_y \otimes I_x$ and $\| \cdot \|_2$ is the vector $2$-norm.
These two inequalities can be easily translated to Eqs.~\eqref{eq3.1} and \eqref{eq3.2}, respectively.
It is worth remarking that $C_1$ can be chosen independently of $N_x$ and $N_y$.

For the second property, under Assumption \ref{assumption1}(a), 
it is straightforward to verify that Eq.~\eqref{eq3.3} is true.
Thus, we omit the proof here.
\end{proof}
With these assumptions and properties, our main convergence result is shown in the next subsection.
\subsection{Convergence analysis}
\label{sec3.2}

Following the construction of our approach,
the global error can be split in three terms:
\begin{itemize}
\item[1)] {The global error of the full-rank Lie-Trotter splitting \eqref{eq2.3a}, i.e.
\begin{equation*}
E_{fs}^{k} = \mathcal{U}(t_k) - \mathcal{L}_{\tau}^{k} (U^0).
\end{equation*}
Here, $\mathcal{U}(t_k) = \left[ u(x_i,y_j,t_k) \right]_{1 \leq i,j \leq N - 1}$.}
\item[2)] {The difference between the full-rank initial value $U^0$ and its rank-$r$ approximation $X^0$, both propagated
by the full-rank Lie-Trotter splitting \eqref{eq2.3a}, i.e.
\begin{equation*}
E_{fl}^{k} = \mathcal{L}_{\tau}^{k} (U^0) - \mathcal{L}_{\tau}^{k} (X^0).
\end{equation*}}
\item[3)] {The difference between the full-rank Lie-Trotter splitting \eqref{eq2.3a} and
the low-rank splitting \eqref{eq2.6} applied to $X^0$
\begin{equation*}
E_{lr}^{k} = \mathcal{L}_{\tau}^{k} (X^0) - \mathcal{L}_{\tau,r}^{k} (X^0).
\end{equation*}}
\end{itemize}

Before proving the convergence of our approach, we first estimate $E_{fs}^{k}$ in the following theorem.
\begin{theorem}
Under Assumption \ref{assumption1}, 
for $1 \leq k \leq M$, the error bound
\begin{equation*}
\| E_{fs}^{k} \|_{F} \leq C_2 [\tau (1 + \left| \log \tau \right|) + h_x^2 + h_y^2]
\end{equation*}
holds. Here, the constant $C_2$ depends on $C_1$, $L$ and $H$.
\label{th3.1}
\end{theorem}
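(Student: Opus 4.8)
The plan is to bound $E_{fs}^{k}$ by inserting the solution $U(t)$ of the semidiscrete matrix equation \eqref{eq2.1} with $U(0)=U^{0}$ as an intermediate object and splitting the error into a spatial and a temporal contribution,
\[
E_{fs}^{k}=\bigl(\mathcal{U}(t_{k})-U(t_{k})\bigr)+\bigl(U(t_{k})-\mathcal{L}_{\tau}^{k}(U^{0})\bigr)=:E_{sp}^{k}+e_{k}.
\]
For $E_{sp}^{k}$ I would proceed in the standard semidiscretization way: by Eqs.~\eqref{eq2.0a}--\eqref{eq2.0b} together with Assumption~\ref{assumption1}(c), substituting the exact solution into \eqref{eq2.1} produces a defect of size $\mathcal{O}(h_{x}^{2}+h_{y}^{2})$; combining the contraction property \eqref{eq3.1} of the linear propagators with the local Lipschitz bound \eqref{eq3.3} on $G$ and a Gronwall argument then gives $\|E_{sp}^{k}\|_{F}\le C(h_{x}^{2}+h_{y}^{2})$. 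This reduces the theorem to estimating the pure splitting error $e_{k}$, which is where the factor $\tau(1+|\log\tau|)$ must appear.

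For $e_{k}$ the key device is a discrete variation-of-constants representation that keeps the stiff propagators explicit. Unrolling the recursion $\mathcal{L}_{\tau}=\Phi_{\tau}^{L}\circ\Phi_{\tau}^{G}$ and the exact flow of \eqref{eq2.1} and subtracting (the homogeneous terms $e^{k\tau A_{x}}U^{0}e^{k\tau A_{y}}$ cancel) yields $e_{k}=\sum_{j=1}^{k}\mathcal{E}_{j}$ with, writing $m=k-j+1$ and denoting by $W^{j}(\sigma)$ the exact flow of \eqref{eq2.3} over the $j$-th step,
\[
\mathcal{E}_{j}=\int_{0}^{\tau}\Bigl[e^{(m\tau-\sigma)A_{x}}G\bigl(U(t_{j-1}+\sigma)\bigr)e^{(m\tau-\sigma)A_{y}}-e^{m\tau A_{x}}G\bigl(W^{j}(\sigma)\bigr)e^{m\tau A_{y}}\Bigr]\,d\sigma.
\]
Adding and subtracting $e^{(m\tau-\sigma)A_{x}}G(W^{j}(\sigma))e^{(m\tau-\sigma)A_{y}}$ splits $\mathcal{E}_{j}=\mathcal{E}_{j}^{(1)}+\mathcal{E}_{j}^{(2)}$ into an argument-difference part and a propagator-difference part.

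For $\mathcal{E}_{j}^{(1)}$ I would use the contraction \eqref{eq3.1} and the Lipschitz bound \eqref{eq3.3} to get $\|\mathcal{E}_{j}^{(1)}\|_{F}\le L\int_{0}^{\tau}\|U(t_{j-1}+\sigma)-W^{j}(\sigma)\|_{F}\,d\sigma$; bounding $U(t_{j-1}+\sigma)-W^{j}(\sigma)$ by comparing the full flow with the nonlinear-only flow (the linear part contributes $\mathcal{O}(\sigma)$ because $A_{x}U+UA_{y}=\dot{U}-G(U)$ is bounded along the smooth exact solution) and by the initial deviation $\|e_{j-1}\|_{F}$, a Gronwall step gives $\|\mathcal{E}_{j}^{(1)}\|_{F}\le C\tau^{2}+C\tau\|e_{j-1}\|_{F}$. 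For $\mathcal{E}_{j}^{(2)}$ I would write the propagator difference as $-\int_{m\tau-\sigma}^{m\tau}e^{\rho A_{x}}(A_{x}Z+ZA_{y})e^{\rho A_{y}}\,d\rho$ with $Z=G(W^{j}(\sigma))$ and invoke the smoothing estimate \eqref{eq3.2} together with $\|G\|_{F}\le H$ from \eqref{eq3.3}, obtaining
\[
\|\mathcal{E}_{j}^{(2)}\|_{F}\le C_{1}H\int_{0}^{\tau}\log\frac{m\tau}{m\tau-\sigma}\,d\sigma.
\]
Since $\sigma\le\tau$ gives $m\tau-\sigma\ge(m-1)\tau$, the integrand is bounded by $\log\frac{m}{m-1}$ for $m\ge2$ (and yields a contribution $C_{1}H\tau$ for $m=1$), so after summation the telescoping $\sum_{m=2}^{k}\log\frac{m}{m-1}=\log k\le\log(T/\tau)$ produces $\sum_{j=1}^{k}\|\mathcal{E}_{j}^{(2)}\|_{F}\le C_{1}H\,\tau(1+|\log\tau|)$.

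Collecting the two contributions gives $\|e_{k}\|_{F}\le C\tau(1+|\log\tau|)+C\tau\sum_{j=1}^{k}\|e_{j-1}\|_{F}$ with $e_{0}=0$, and a discrete Gronwall inequality yields $\|e_{k}\|_{F}\le C_{2}\tau(1+|\log\tau|)$ with $C_{2}$ depending only on $C_{1}$, $L$, $H$ (and $T$); adding the spatial bound completes the estimate. I expect the main obstacle to be the treatment of $\mathcal{E}_{j}^{(2)}$: the local defect of the stiff splitting is only $\mathcal{O}(\tau)$, so the result cannot be obtained by naively summing local errors. One must retain the exact propagators $e^{m\tau A}$ through the unrolling and exploit the smoothing \eqref{eq3.2} so that each step contributes $\mathcal{O}(\tau\log\frac{m}{m-1})$ rather than $\mathcal{O}(\tau)$; it is precisely this summation that converts the stiffness into the benign logarithmic factor and keeps the constant independent of $\|A_{x}\|$ and $\|A_{y}\|$.
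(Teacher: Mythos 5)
Your proposal is correct, and its skeleton is identical to the paper's: insert the semidiscrete solution $U(t_k)$ of \eqref{eq2.1} via the triangle inequality, bound the spatial part by $\mathcal{O}(h_x^2+h_y^2)$ and the splitting part by $\mathcal{O}(\tau(1+|\log\tau|))$, and add. The real difference is in execution: the paper's proof consists essentially of two citations --- the spatial bound is quoted from \cite{zhang2020exponential,sun2016some} via Assumption \ref{assumption1}(c), and the temporal bound is quoted verbatim from Proposition 1 of \cite{ostermann2019convergence} together with Property \ref{property1} --- whereas you reprove both ingredients from scratch. Your unrolled variation-of-constants representation (which is valid despite the nonlinearity, since each step is a fixed linear map of the previous iterate plus a forcing term), the splitting into the argument-difference term $\mathcal{E}_j^{(1)}$ and the propagator-difference term $\mathcal{E}_j^{(2)}$, the smoothing bound \eqref{eq3.2} applied to $\mathcal{E}_j^{(2)}$, and the telescoping sum $\sum_{m=2}^{k}\log\frac{m}{m-1}=\log k\le\log(T/\tau)$ are precisely the mechanism inside the cited Proposition 1. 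Your write-up makes explicit why the constant is independent of $\|A_x\|,\|A_y\|$ and where the logarithm comes from, which the paper's three-line proof leaves entirely inside the reference; the paper, in turn, buys brevity.

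The one step you should shore up is the $\mathcal{O}(\tau^2)+\mathcal{O}(\tau)\|e_{j-1}\|_F$ bound on $\mathcal{E}_j^{(1)}$. It rests on the claim that $A_xU+UA_y=\dot U-G(U)$ is bounded along the semidiscrete solution \emph{uniformly in} $h_x,h_y$. This is not a consequence of \eqref{eq2.1} itself (note $\|A_x\|_2\sim h_x^{-\alpha}$); it has to be extracted from Assumption \ref{assumption1}(a),(c): by consistency of the fractional centered differences \eqref{eq2.0a}--\eqref{eq2.0b}, $A_x\mathcal{U}+\mathcal{U}A_y$ approximates the bounded grid function $(\nu+\mathrm{\mathbf{i}}\eta)(\partial_x^\alpha+\partial_y^\beta)u$, while the remainder $A_x(U-\mathcal{U})+(U-\mathcal{U})A_y$ is small because the spatial error is $\mathcal{O}(h_x^2+h_y^2)$ and $\alpha,\beta<2$. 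Note that the result you are reconstructing does not assume such a bound: it is derived from the contraction \eqref{eq3.1}, the smoothing \eqref{eq3.2} and the bounds \eqref{eq3.3} alone, at the price of a more careful treatment of exactly this term (one represents $U(t_{j-1})$ itself by variation of constants and applies \eqref{eq3.2} once more, giving a local contribution $\mathcal{O}(\tau^2(1+|\log\tau|))$ that sums to the same global bound). Without one of these two devices, the linear drift $\int_0^\sigma (A_xU+UA_y)(t_{j-1}+s)\,ds$ hidden in $U(t_{j-1}+\sigma)-W^j(\sigma)$ cannot be bounded by $\mathcal{O}(\sigma)$ independently of the mesh, and your local estimate would fail. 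A final minor point, which the paper also glosses over: the use of the local Lipschitz bound \eqref{eq3.3} requires all compared quantities to stay in the neighborhood of the exact solution, which needs a standard bootstrap argument for $\tau$ sufficiently small.
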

\begin{proof}
By the triangle inequality, we first get
\begin{equation*}
\| E_{fs}^{k} \|_{F} \leq \| \mathcal{U}(t_k) - U(t_k)\|_{F} 
+ \| U(t_k) - \mathcal{L}_{\tau}^{k} (U^0) \|_{F}.
\end{equation*}
On the one hand, we know from Assumption \ref{assumption1}(c) and \cite{zhang2020exponential,sun2016some}
that there exists a constant $\bar{C}_2$ satisfying
\begin{equation*}
\| \mathcal{U}(t_k) - U(t_k)\|_{F} \leq \bar{C}_2 \left( h_x^2 + h_y^2 \right).
\end{equation*}
On the other hand, noticing Proposition 1 in \cite{ostermann2019convergence} and Property \ref{property1}.
We have 
\begin{equation*}
\| E_{fs}^{k} \|_{F} \leq \tilde{C}_2 [\tau (1 + \left| \log \tau \right|),
\end{equation*}
where Assumption \ref{assumption1} is used and $\tilde{C}_2$ depends on $C_1$, $L$ and $H$.
Then, the desired result can be obtained by choosing $C_2 = \max \left\{ \bar{C}_2, \tilde{C}_2 \right\}$.
\end{proof}
According to \cite[Section 5]{ostermann2019convergence}, we obtain that
under Assumption \ref{assumption1}, $E_{lr}^{k}$ is bounded on $t_0 \leq t_0 + k \tau \leq T$ as
\begin{equation}
\| E_{lr}^{k} \|_{F} \leq C_3 \varepsilon + C_4 \tau,
\label{eq3.4}
\end{equation}
where Property \ref{property1} is used.
Here, the constants $C_3$ and $C_4$ depend on $H$, $L$ and $T$.

With these two bounds at hand, we now show the boundedness of the global error of the low-rank Lie-Trotter splitting integrator.
\begin{theorem}
Under Assumption \ref{assumption1}, there exists $\tilde{\tau}$ such that for all $0 < \tau \leq \tilde{\tau}$, 
the error of $\mathcal{L}_{\tau, r}$ is bounded on $t_0 \leq t_0 + k \tau \leq T$ by 
\begin{equation}
\left\| \mathcal{U}(t_k) - \mathcal{L}_{\tau, r}^{k} (X^0) \right\|_{F} \leq 
C_3 \varepsilon + C_5 [\tau (1 + \left| \log \tau \right|) + h_x^2 + h_y^2] + e^{L T} \sigma.
\label{eq3.5}
\end{equation}
Here $C_3$ and $C_5$ (containing $C_2$ and $C_4$) are independent of $\tau$ and $k$.
\label{th3.2}
\end{theorem}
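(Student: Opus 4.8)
The plan is to use the three-term splitting of the global error already set up in the paper, namely
\[
\mathcal{U}(t_k) - \mathcal{L}_{\tau, r}^{k}(X^0) = E_{fs}^{k} + E_{fl}^{k} + E_{lr}^{k},
\]
and to bound each piece separately via the triangle inequality. The term $E_{fs}^{k}$ is already controlled by Theorem \ref{th3.1}, and $E_{lr}^{k}$ is controlled by the bound \eqref{eq3.4} imported from \cite{ostermann2019convergence}. Hence the only genuinely new ingredient is a bound on the middle term $E_{fl}^{k} = \mathcal{L}_{\tau}^{k}(U^0) - \mathcal{L}_{\tau}^{k}(X^0)$, which measures how the initial discrepancy $\|U^0 - X^0\|_F \leq \sigma$ is propagated by $k$ steps of the full-rank Lie--Trotter map.

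First I would establish a stability (Lipschitz) estimate for the full-rank map $\mathcal{L}_{\tau} = \Phi_{\tau}^{L} \circ \Phi_{\tau}^{G}$. The linear flow is nonexpansive: by Property \ref{property1}(a), inequality \eqref{eq3.1} gives $\|\Phi_{\tau}^{L}(Y) - \Phi_{\tau}^{L}(Z)\|_F = \|e^{\tau A_x}(Y-Z)e^{\tau A_y}\|_F \leq \|Y - Z\|_F$, so $\Phi_{\tau}^{L}$ is $1$-Lipschitz. For the nonlinear flow $\Phi_{\tau}^{G}$, I would write the difference of two solutions of \eqref{eq2.3} in integral form, use the local Lipschitz bound \eqref{eq3.3} of $G$ with constant $L$, and apply Gronwall's inequality to obtain $\|\Phi_{\tau}^{G}(U) - \Phi_{\tau}^{G}(V)\|_F \leq e^{L\tau}\|U - V\|_F$. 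Composing the two yields $\|\mathcal{L}_{\tau}(U) - \mathcal{L}_{\tau}(V)\|_F \leq e^{L\tau}\|U - V\|_F$, and iterating over the $k \leq M$ steps (with $k\tau \leq T$) gives $\|E_{fl}^{k}\|_F \leq e^{k L \tau}\sigma \leq e^{L T}\sigma$.

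Combining the three bounds and merging constants then completes the proof: since $C_4\tau \leq C_4\,\tau(1 + |\log\tau|)$, the $\tau$-term from \eqref{eq3.4} can be absorbed together with the $C_2$-contribution of Theorem \ref{th3.1} into a single constant $C_5 = C_2 + C_4$, which leaves exactly the right-hand side of \eqref{eq3.5} with the stated $C_3\varepsilon$ and $e^{LT}\sigma$ terms.

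The main obstacle, and the reason for the qualifier ``there exists $\tilde{\tau}$'' in the statement, is that the Lipschitz constant $L$ and the bound $H$ in Property \ref{property1}(b) hold only in a $\tilde{\xi}$-neighborhood of the exact solution $U(t)$. To invoke these estimates I must first guarantee that all intermediate iterates---$\mathcal{L}_{\tau}^{j}(U^0)$, $\mathcal{L}_{\tau}^{j}(X^0)$ and $\mathcal{L}_{\tau,r}^{j}(X^0)$ for $0 \leq j \leq k$, together with the internal stages of the flows---remain inside this tube. I expect to handle this by an induction on $k$: assuming the total error stays below $\tilde{\xi}$ up to step $k-1$ validates the local estimates at step $k$, and choosing $\tau \leq \tilde{\tau}$ small enough (with $\varepsilon$ and $\sigma$ correspondingly small) keeps the accumulated bound $C_3\varepsilon + C_5[\tau(1+|\log\tau|) + h_x^2 + h_y^2] + e^{LT}\sigma$ below $\tilde{\xi}$, closing the bootstrap. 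Making this a-priori confinement rigorous---rather than the otherwise routine Gronwall and triangle-inequality steps---is the delicate part of the argument.
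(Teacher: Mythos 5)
Your proposal is correct and takes essentially the same route as the paper: the identical three-term decomposition, with $E_{fs}^{k}$ bounded by Theorem~\ref{th3.1}, $E_{lr}^{k}$ by Eq.~\eqref{eq3.4}, and $\| E_{fl}^{k} \|_{F} \leq e^{LT}\sigma$ via stability of the full-rank Lie--Trotter map, followed by the triangle inequality and merging of constants. The only difference is one of self-containedness: where the paper simply cites \cite[Proposition 1]{ostermann2019convergence} for that stability bound, you prove it directly (nonexpansiveness of the linear flow from \eqref{eq3.1} plus a Gronwall argument for $\Phi_{\tau}^{G}$), and you make explicit the bootstrap confinement argument behind the qualifier ``there exists $\tilde{\tau}$,'' which the paper leaves implicit.
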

\begin{proof}
Due to the stability of $\mathcal{L}_{\tau}$ (see \cite[Proposition 1]{ostermann2019convergence}), 
the error $E_{fl}^{k}$ satisfies
\begin{equation*}
\| E_{fl}^{k} \|_{F} = \| \mathcal{L}_{\tau}^{k} (U^0) - \mathcal{L}_{\tau}^{k} (X^0) \|_{F}
\leq e^{LT} \sigma.
\end{equation*}
	
Combining Theorem \ref{th3.1} and Eq.~\eqref{eq3.4}, the bound \eqref{eq3.5} is obtained. 
\end{proof}


\section{Numerical experiments}
\label{sec5}
In this section, two examples on square domains are provided to verify the convergence rate of our method.
In these examples, we fix $N_x = N_y = N$ and $h_x = h_y = h$.
Let 
\begin{equation*}
\textrm{relerr}(\tau,h) = \frac{\left\| X^M - \mathcal{U}(T) \right\|_{F}}{\left\| \mathcal{U}(T) \right\|_{F}}.
\end{equation*}
Then, we denote
\begin{equation*}
\textrm{rate}_{\tau} = \log_{\tau_1/\tau_2} \frac{\textrm{relerr}(\tau_1,h)}{\textrm{relerr}(\tau_2,h)} 
\quad \mathrm{and} \quad
\textrm{rate}_{h} = \log_{h_1/h_2} \frac{\textrm{relerr}(\tau,h_1)}{\textrm{relerr}(\tau,h_2)}.
\end{equation*}

All experiments are carried out in MATLAB 2018b on a Windows 10 (64 bit) PC with the following configuration:
Intel(R) Core(TM) i7-8700k CPU 3.20 GHz and 16 GB RAM.
\begin{figure}[t]	
	\centering
	\subfigure{
			\includegraphics[width=2.3in,height=2.0in]{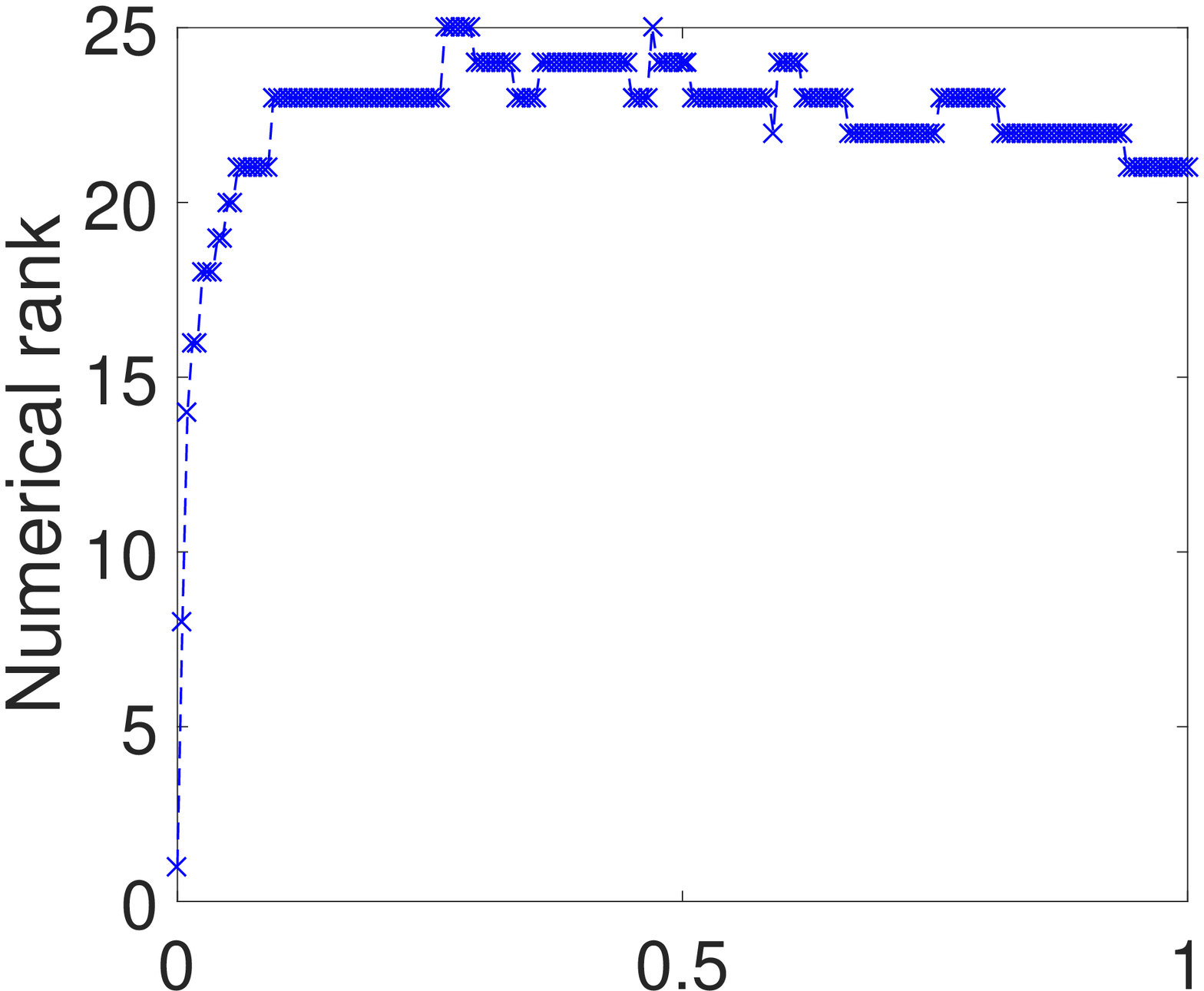}} \hspace{10mm}
\subfigure{
		\includegraphics[width=2.6in,height=2.0in]{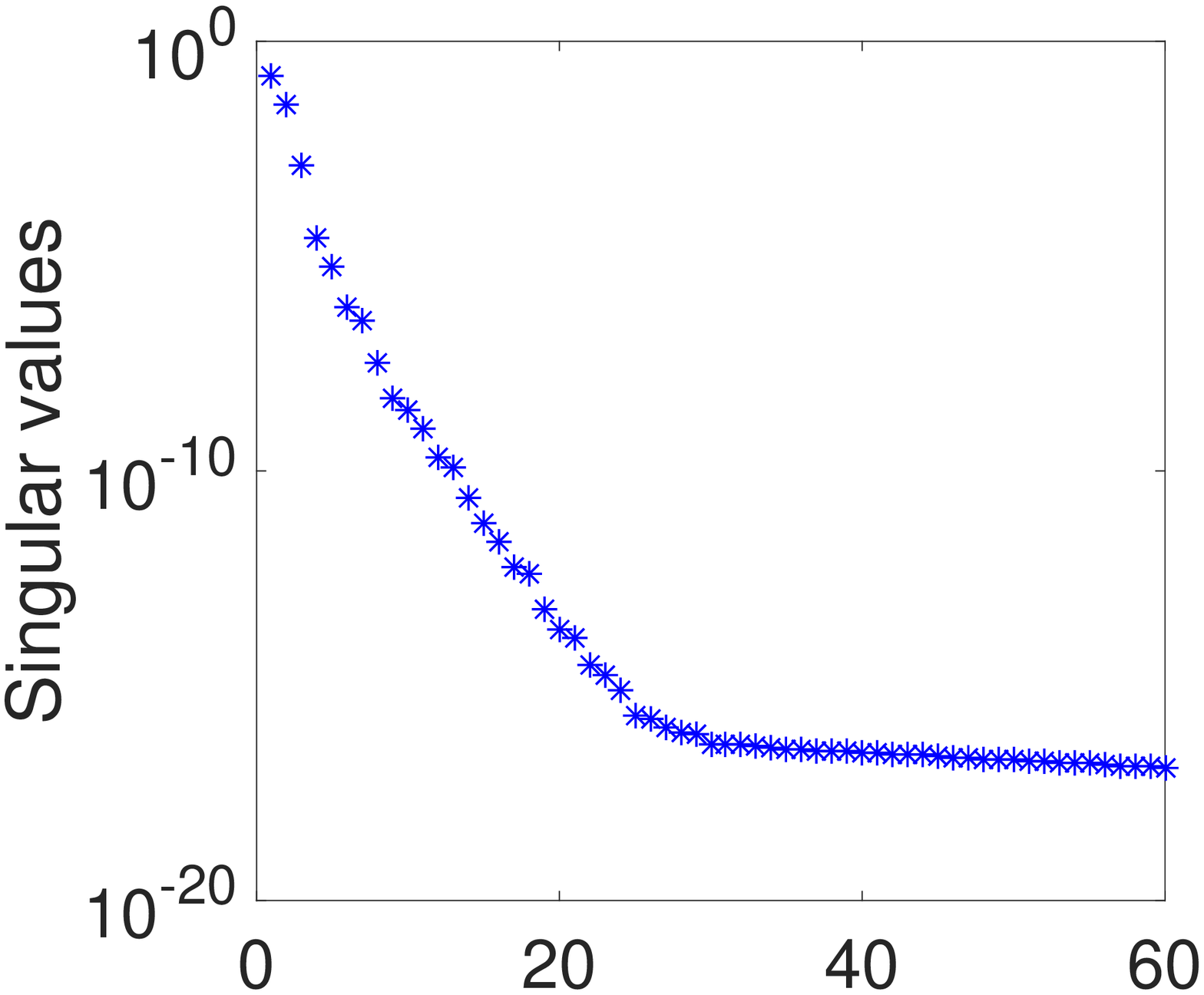}}			
	\caption{Results for Example 1 for $(\alpha, \beta) = (1.5, 1.5)$ and $N = M = 200$.
	Left: Numerical rank of the LBDF2 solution as a function of $t$. 
Right: First $60$ singular values of the LBDF2 solution at $t = T$.}
	\label{fig1}
\end{figure}

\noindent \textbf{Example 1.} In this example, the FGLE \eqref{eq1.1} 
is considered on $\Omega = [-10, 10] \times [-10, 10]$
with $u_0(x,y) = 2\; \mathrm{sech}(x)\; \mathrm{sech}(y) e^{3 \mathrm{\mathbf{i}} (x + y)}$, 
$\nu = \eta = \kappa = \xi = \gamma = 1$ and $T = 1$. 
For this case, the exact solution is unknown.
The numerical solution ($(N, M) = (512, 10000)$) computed by 
the linearized second-order backward differential (LBDF2) scheme \cite{zhang2020linearized}
is used as the reference solution.
Moreover, we denote the numerical solution obtained 
by the LBDF2 scheme \cite{zhang2020linearized} as the LBDF2 solution.

For rank $r \geq 3$, the relative errors in Table \ref{tab1} decrease steadily 
with increasing the number of time steps $M$. 
In this table, the observed temporal convergence order is $1$ as expected.
Table \ref{tab2} reports the relative errors and the observed convergence order in space 
for different values of $\alpha$ and $\beta$. 
It shows that for fixed $M = 10000$, the convergence order in space is indeed $2$ for $r \geq 3$. 
This is in good agreement with our theoretical analysis in Section \ref{sec3.2}.
Fig.~\ref{fig1} (left) shows the rank of the LBDF2 solution with $(\alpha, \beta) = (1.5, 1.5)$ and $N = M = 200$.
It can be observed that the effective rank of the solution is low.
In Fig.~\ref{fig1} (right), we plot the first $60$ singular values of the LBDF2 solution at $T$.
In Fig.~\ref{fig2}, we compare the absolute values of the LBDF2 solution and our low-rank solution (rank $r = 5$) 
at $t = T$ for different values of $\alpha$ and $\beta$.
It can be seen that our proposed method is robust and accurate.
\begin{figure}[t]	
	\centering
	\subfigure{\includegraphics[width=2.1in,height=2.0in]{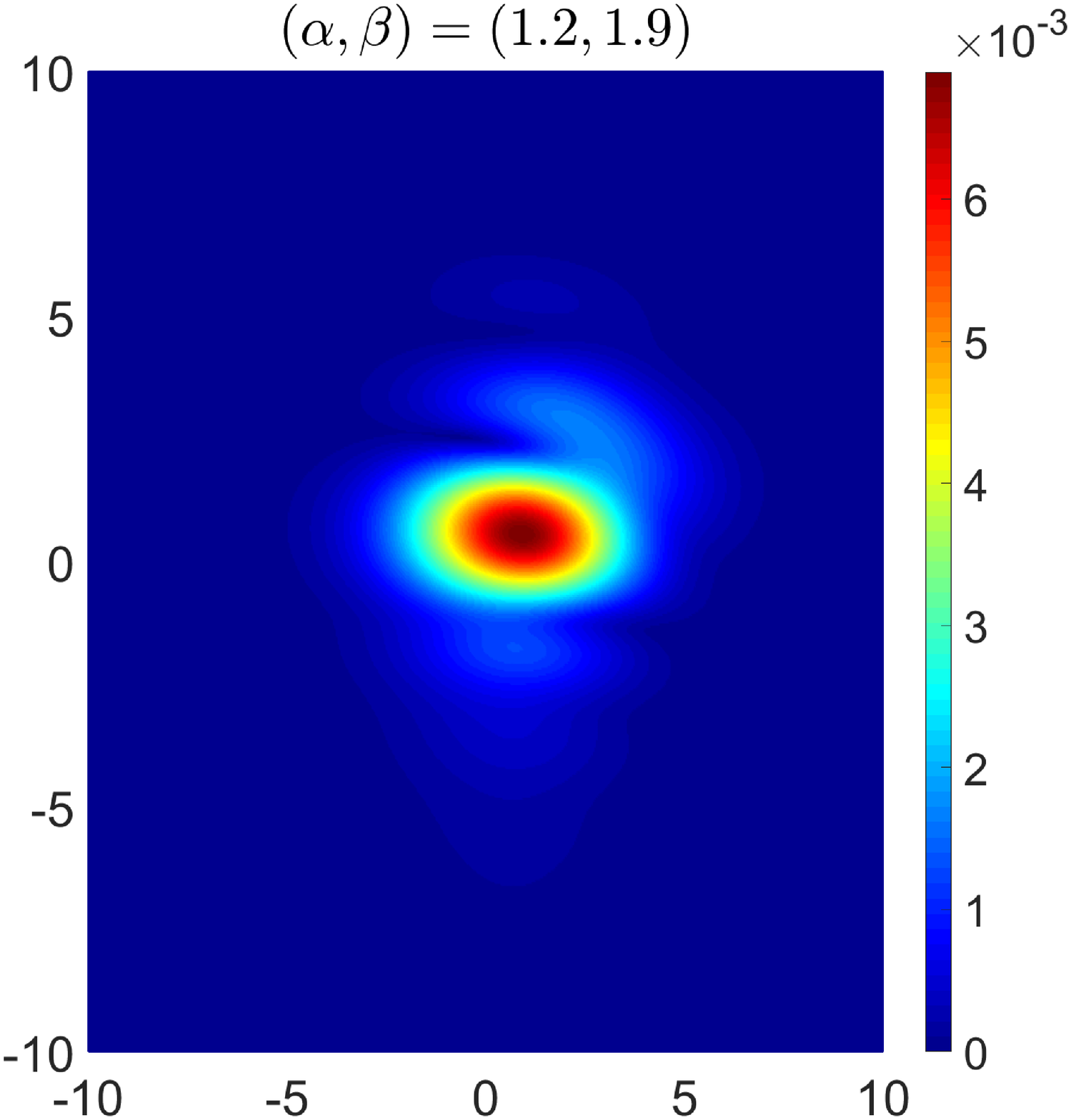}}
	\subfigure{\includegraphics[width=2.1in,height=2.0in]{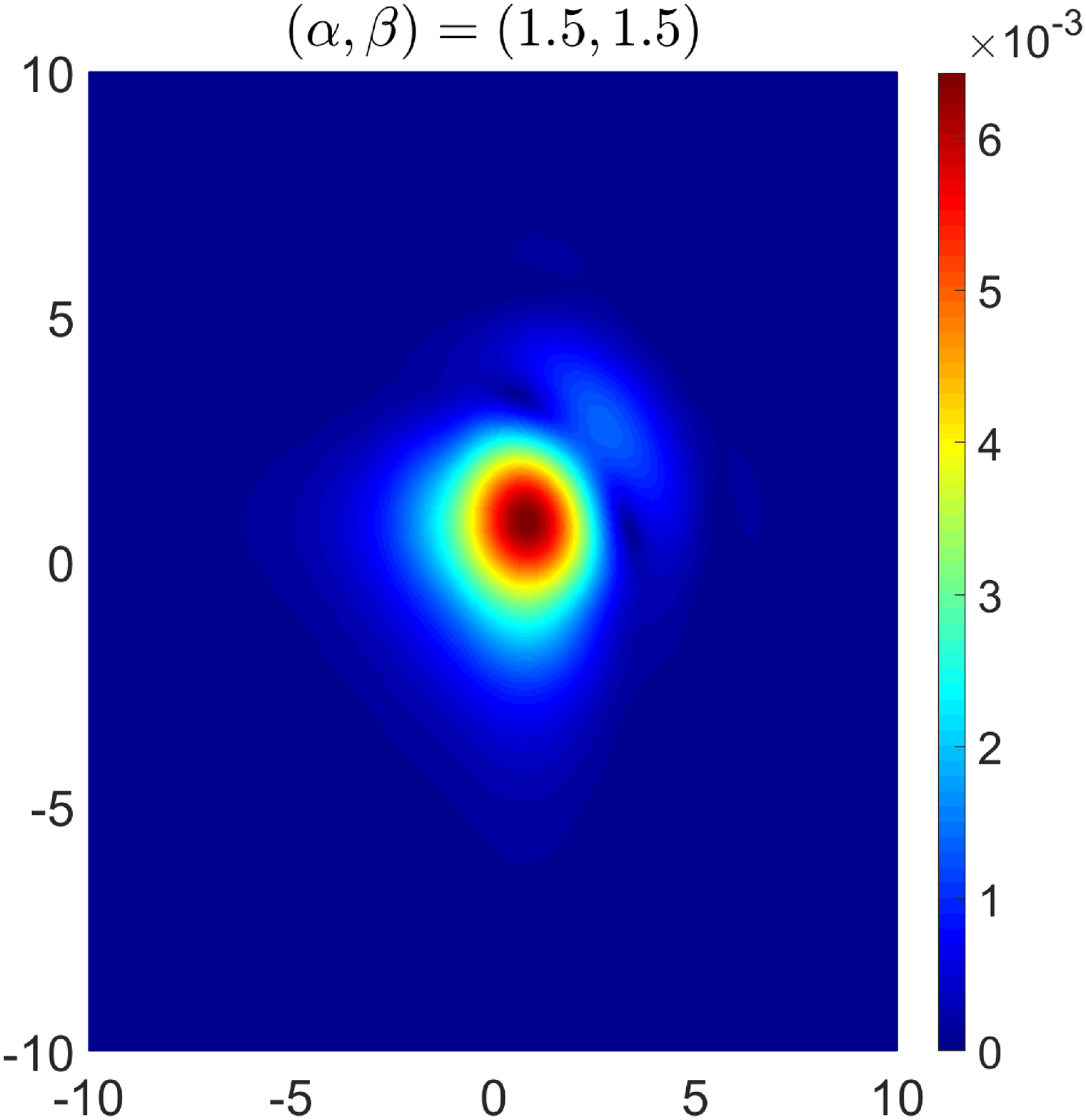}}
	\subfigure{\includegraphics[width=2.1in,height=2.0in]{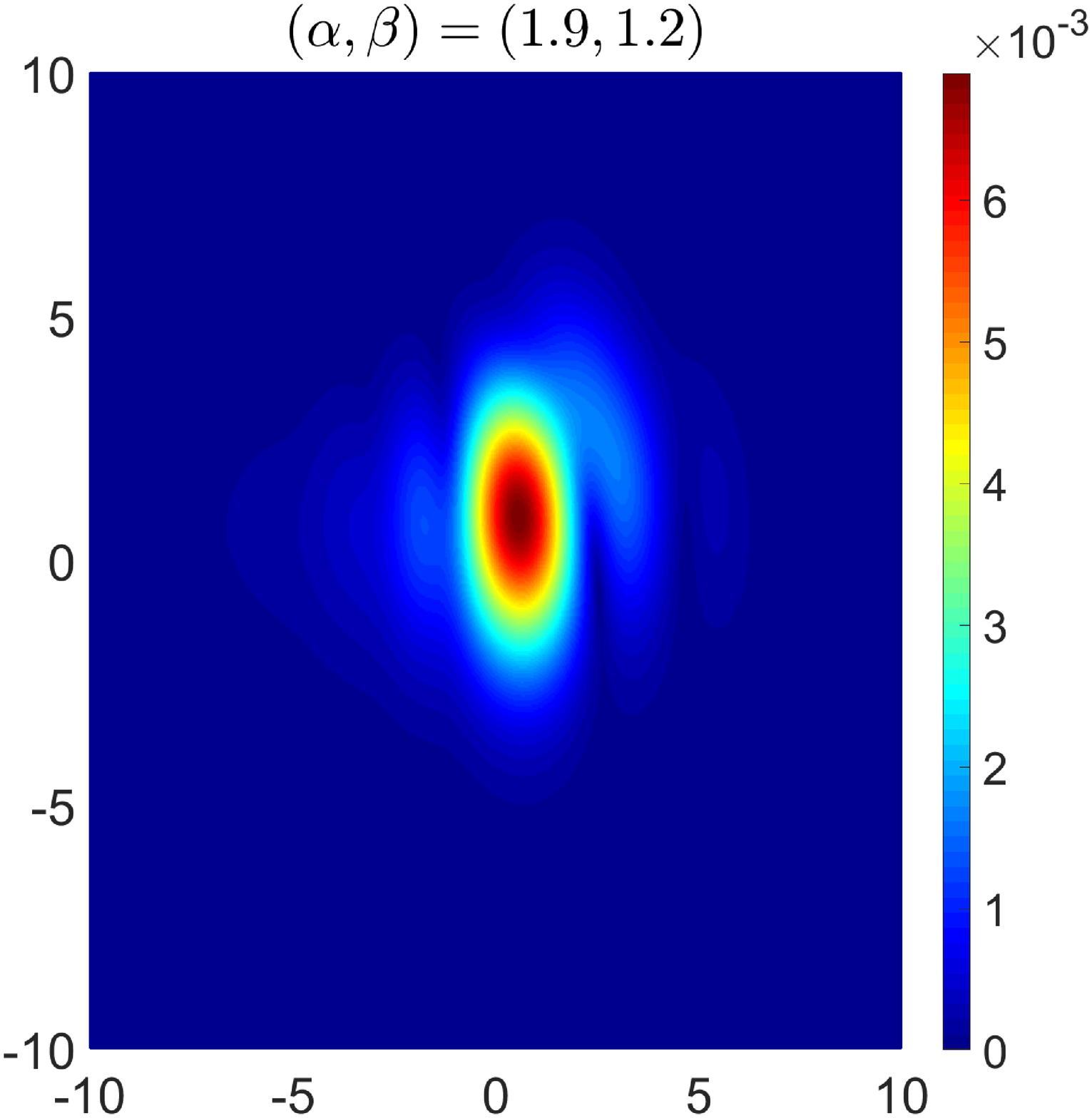}} \\
	\subfigure{\includegraphics[width=2.1in,height=2.0in]{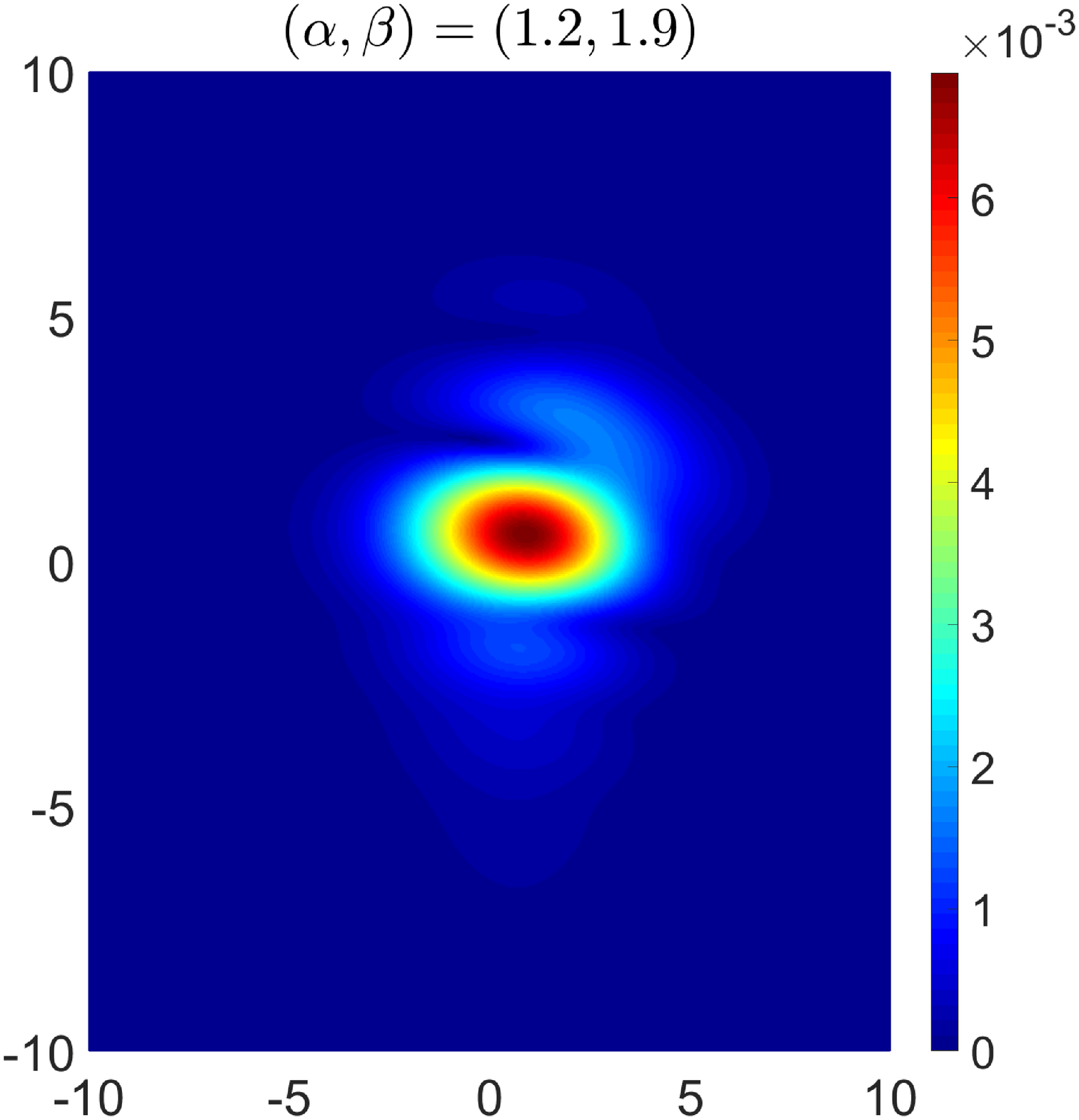}}
\subfigure{\includegraphics[width=2.1in,height=2.0in]{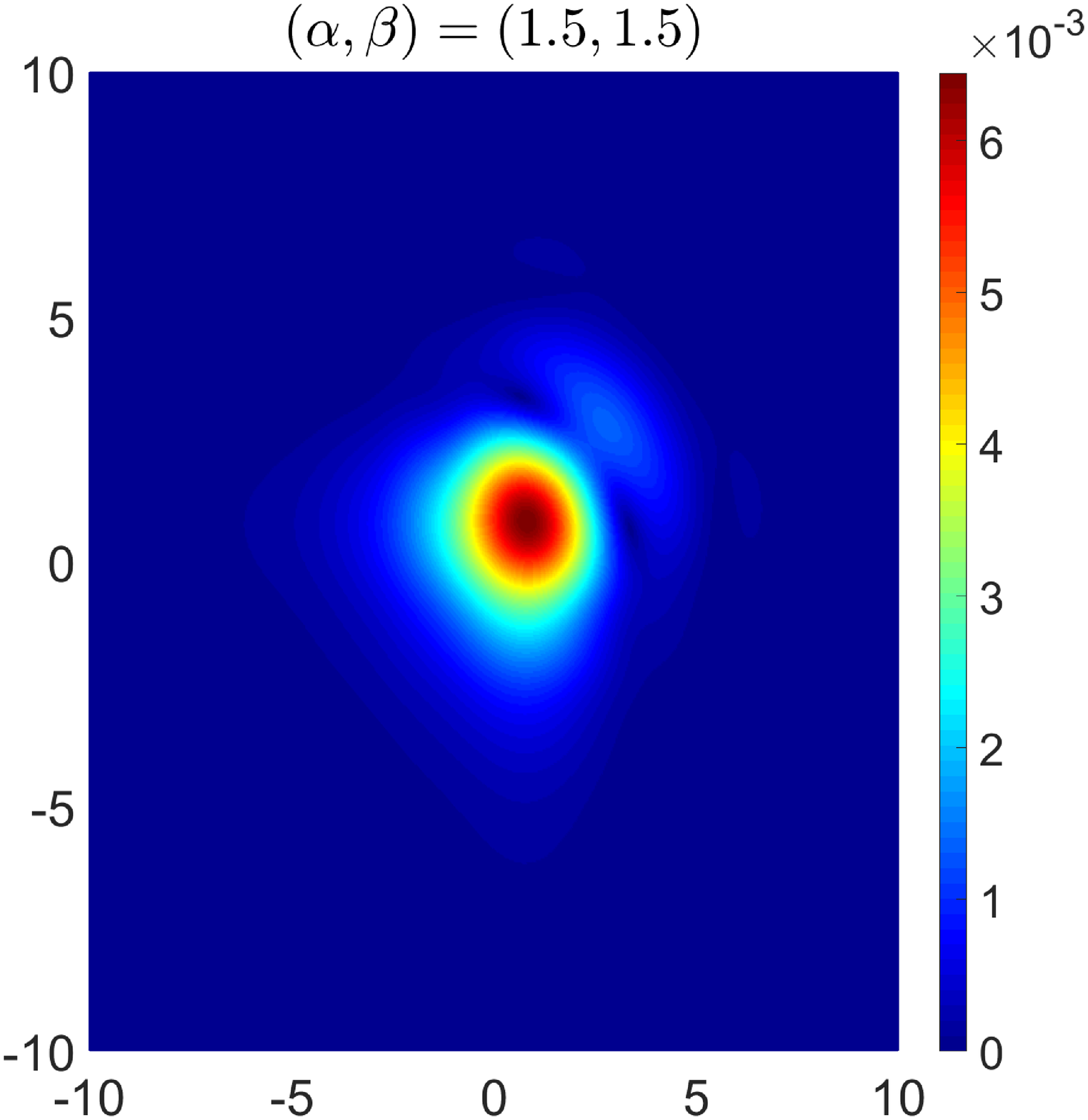}}
\subfigure{\includegraphics[width=2.1in,height=2.0in]{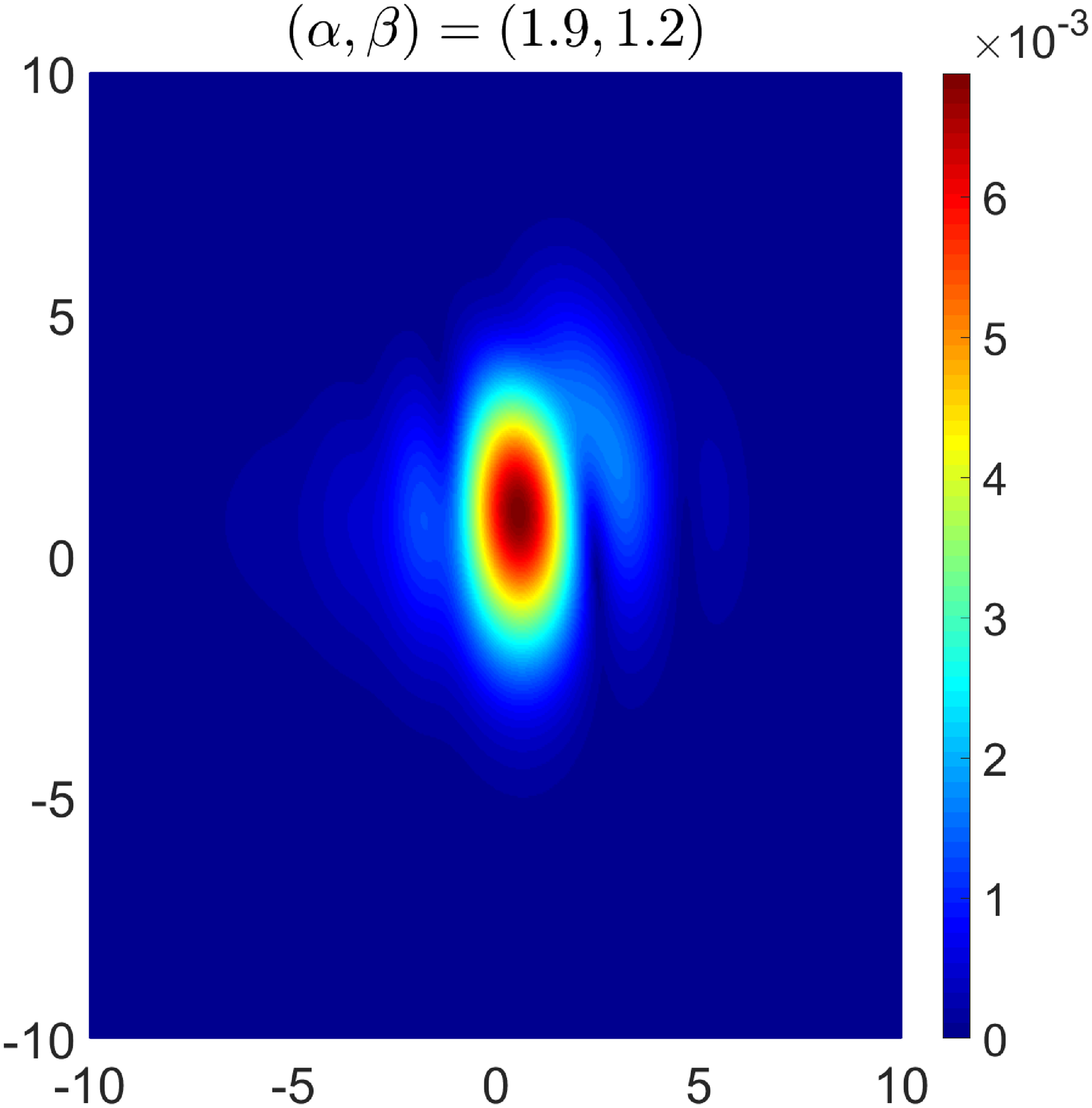}}			
	\caption{Comparison of the absolute values of the LBDF2 solution and our low-rank solution at $t = T$ for $(N, M) = (512, 200)$ 
		and different values of $\alpha$ and $\beta$ for Example 1. 
		Top row: The LBDF2 solution. Bottom row: The low-rank solution (rank $r = 5$).}
	\label{fig2}
\end{figure}
\begin{landscape}
	\begin{table}[t]\scriptsize\tabcolsep=4.0pt
		\begin{center}
			\caption{Relative errors in Frobenius norm and observed temporal convergence orders for $N = 512$ for Example 1.}
			\centering
			\begin{tabular}{cccccccccccc}
				\hline
				& & \multicolumn{2}{c}{$r = 1$} & \multicolumn{2}{c}{$r = 2$} & \multicolumn{2}{c}{$r = 3$} 
				& \multicolumn{2}{c}{$r = 4$} & \multicolumn{2}{c}{$r = 5$} \\
				[-2pt] \cmidrule(lr){3-4} \cmidrule(lr){5-6} \cmidrule(lr){7-8} \cmidrule(lr){9-10} 
				\cmidrule(lr){11-12} \\ [-11pt]
				$(\alpha, \beta)$ & $M$ & $\textrm{relerr}(\tau,h)$ & $\textrm{rate}_\tau$ 
				& $\textrm{relerr}(\tau,h)$ & $\textrm{rate}_\tau$
				& $\textrm{relerr}(\tau,h)$ & $\textrm{rate}_\tau$ & $\textrm{relerr}(\tau,h)$ & $\textrm{rate}_\tau$
				& $\textrm{relerr}(\tau,h)$ & $\textrm{rate}_\tau$ \\
				\hline
				(1.2, 1.9) & 16 & 4.8609E-01 & -- & 8.2522E-01 & -- & 7.9346E-01 & -- & 7.9240E-01 & -- & 7.9253E-01 & -- \\
				& 64 & 5.3468E-01 & -0.0687 & 2.0577E-01 & 1.0019 & 2.0557E-01 & 0.9743 & 2.0455E-01 & 0.9769 & 2.0499E-01 & 0.9755 \\
				& 256 & 5.7709E-01 & -0.0551 & 5.2778E-02 & 0.9815 & 5.1313E-02 & 1.0011 & 5.0386E-02 & 1.0107 & 5.0491E-02 & 1.0107 \\
				& 1024 & 5.8871E-01 & -0.0144 & 2.4422E-02 & 0.5559 & 1.3848E-02 & 0.9448 & 1.2499E-02 & 1.0056 & 1.2581E-02 & 1.0024 \\
				\\
				(1.5, 1.5) & 16 & 5.5049E-01 & -- & 7.9464E-01 & -- & 7.6149E-01 & -- & 7.5911E-01 & -- & 7.5953E-01 & -- \\
				& 64 & 6.4759E-01 & -0.1172 & 1.9941E-01 & 0.9973 & 1.9988E-01 & 0.9648 & 1.9839E-01 & 0.9680 & 	1.9883E-01 & 0.9668 \\ 
				& 256 & 6.9229E-01 & -0.0481 & 5.4117E-02 & 0.9408 & 5.0664E-02 & 0.9901 & 4.9256E-02 & 1.0050 & 4.9399E-02 & 1.0045 \\
				& 1024 & 7.0413E-01 & -0.0122 & 3.0728E-02 & 0.4083 & 1.4516E-02 & 0.9017 & 1.2248E-02 & 1.0039 & 	1.2361E-02 & 0.9993 \\
				\\     
				(1.7, 1.3) & 16 & 5.2921E-01 & -- & 7.8605E-01 & -- & 7.5265E-01 & -- & 7.5042E-01 & -- & 7.5024E-01 & -- \\
				& 64 & 6.0475E-01 & -0.0962 & 1.9632E-01 & 1.0007 & 1.9643E-01 & 0.9690 & 1.9513E-01 & 0.9716 & 	1.9547E-01 & 0.9702 \\
				& 256 & 6.4751E-01 & -0.0493 & 5.2519E-02 & 0.9511 & 4.9500E-02 & 0.9943 & 4.8334E-02 & 1.0067 & 4.8461E-02 & 1.0060 \\
				& 1024 & 6.5899E-01 & -0.0127 & 2.8515E-02 & 0.4406 & 1.3823E-02 & 0.9202 & 1.2008E-02 & 1.0045 & 1.2113E-02 & 1.0001 \\
				\\           
				(1.9, 1.2) & 16 & 4.9746E-01 & -- & 8.3250E-01 & -- & 7.9540E-01 & -- & 7.9353E-01 & -- & 7.9298E-01 & -- \\
				& 64 & 5.3481E-01 & -0.0522 & 2.0620E-01 & 1.0067 & 2.0586E-01 & 0.9750 & 2.0476E-01 & 0.9772 & 2.0499E-01 & 0.9759 \\
				& 256 & 5.7706E-01 & -0.0548 & 5.2760E-02 & 0.9833 & 5.1371E-02 & 1.0013 & 5.0393E-02 & 1.0113 & 5.0492E-02 & 1.0107 \\
				& 1024 & 5.8870E-01 & -0.0144 & 2.4418E-02 & 0.5557 & 1.3867E-02 & 0.9446 & 1.2499E-02 & 1.0057 & 1.2581E-02 & 1.0024 \\
				\hline
			\end{tabular}
			\label{tab1}
		\end{center}
	\end{table}
	\begin{table}[t]\scriptsize\tabcolsep=4.0pt
		\begin{center}
			\caption{Relative errors in Frobenius norm and observed spatial convergence orders 
				for $M = 10000$ for Example 1.}
			\centering
			\begin{tabular}{cccccccccccc}
				\hline
				& & \multicolumn{2}{c}{$r = 1$} & \multicolumn{2}{c}{$r = 2$} & \multicolumn{2}{c}{$r = 3$} 
				& \multicolumn{2}{c}{$r = 4$} & \multicolumn{2}{c}{$r = 5$} \\
				[-2pt] \cmidrule(lr){3-4} \cmidrule(lr){5-6} \cmidrule(lr){7-8} \cmidrule(lr){9-10} 
				\cmidrule(lr){11-12} \\ [-11pt]
				$(\alpha, \beta)$ & $N$ & $\textrm{relerr}(\tau,h)$ & $\textrm{rate}_h$ 
				& $\textrm{relerr}(\tau,h)$ & $\textrm{rate}_h$
				& $\textrm{relerr}(\tau,h)$ & $\textrm{rate}_h$ & $\textrm{relerr}(\tau,h)$ & $\textrm{rate}_h$
				& $\textrm{relerr}(\tau,h)$ & $\textrm{rate}_h$ \\
				\hline
				(1.2, 1.9) & 32 & 7.1496E-01 & -- & 6.0075E-01 & -- & 6.0033E-01 & -- & 6.0079E-01 & -- & 6.0069E-01 & -- \\
				& 64 & 5.8194E-01 & 0.2970 & 1.2508E-01 & 2.2639 & 1.2512E-01 & 2.2624 & 1.2478E-01 & 2.2675 & 1.2482E-01 & 2.2668 \\
				& 128 & 5.8821E-01 & -0.0155 & 3.5052E-02 & 1.8353 & 2.9685E-02 & 2.0755 & 2.8836E-02 & 2.1134 & 2.8893E-02 & 2.1111 \\
				& 256 & 5.9137E-01 & -0.0077 & 2.2788E-02 & 0.6212 & 7.9685E-03 & 1.8974 & 6.1959E-03 & 2.2185 & 6.2558E-03 & 2.2075 \\
				\\
				(1.5, 1.5) & 32 & 8.2826E-01 & -- & 6.2358E-01 & -- & 6.2338E-01 & -- & 6.2391E-01 & -- & 6.2380E-01 & -- \\
				& 64 & 6.9735E-01 & 0.2482 & 1.2391E-01 & 2.3313 & 1.2393E-01 & 2.3306 & 1.2330E-01 & 2.3392 & 1.2337E-01 & 2.3381 \\
				& 128 & 7.0396E-01 & -0.0136 & 3.8797E-02 & 1.6753 & 2.9802E-02 & 2.0560 & 2.8319E-02 & 2.1223 & 2.8406E-02 & 2.1187 \\
				& 256 & 7.0691E-01 & -0.0060 & 2.9772E-02 & 0.3820 & 9.2925E-03 & 1.6813 & 6.1664E-03 & 2.1993 & 6.2448E-03 & 2.1855 \\
				\\
				(1.7, 1.3) & 32 & 7.9484E-01 & -- & 6.2588E-01 & -- & 6.2572E-01 & -- & 6.2618E-01 & -- & 6.2608E-01 & -- \\
				& 64 & 6.5274E-01 & 0.2842 & 1.2632E-01 & 2.3088 & 1.2627E-01 & 2.3090 & 1.2579E-01 & 2.3156 & 1.2585E-01 & 2.3146 \\
				& 128 & 6.5868E-01 & -0.0131 & 3.7824E-02 & 1.7397 & 3.0045E-02 & 2.0713 & 2.8901E-02 & 2.1218 & 2.8976E-02 & 2.1188 \\ 
				& 256 & 6.6165E-01 & -0.0065 & 2.7462E-02 & 0.4619 & 8.6706E-03 & 1.7929 & 6.2136E-03 & 2.2176 & 6.2862E-03 & 2.2046 \\
				\\
				(1.9, 1.2) & 32 & 7.1495E-01 & -- & 6.0075E-01 & -- & 6.0033E-01 & -- & 6.0079E-01 & -- & 6.0069E-01 & -- \\
				& 64 & 5.8194E-01 & 0.2970 & 1.2507E-01 & 2.2640 & 1.2512E-01 & 2.2624 & 1.2478E-01 & 2.2675 & 1.2482E-01 & 2.2668 \\
				& 128 & 5.8821E-01 & -0.0155 & 3.5052E-02 & 1.8352 & 2.9686E-02 & 2.0755 & 2.8836E-02 & 2.1134 & 2.8893E-02 & 2.1111 \\
				& 256 & 5.9137E-01 & -0.0077 & 2.2788E-02 & 0.6212 & 7.9705E-03 & 1.8970 & 6.1959E-03 & 2.2185 & 6.2558E-03 & 2.2075 \\
				\hline
			\end{tabular}
			\label{tab2}
		\end{center}
	\end{table}
\end{landscape}
\begin{figure}[H]	
	\centering
	\subfigure{\includegraphics[width=2.1in,height=2.0in]{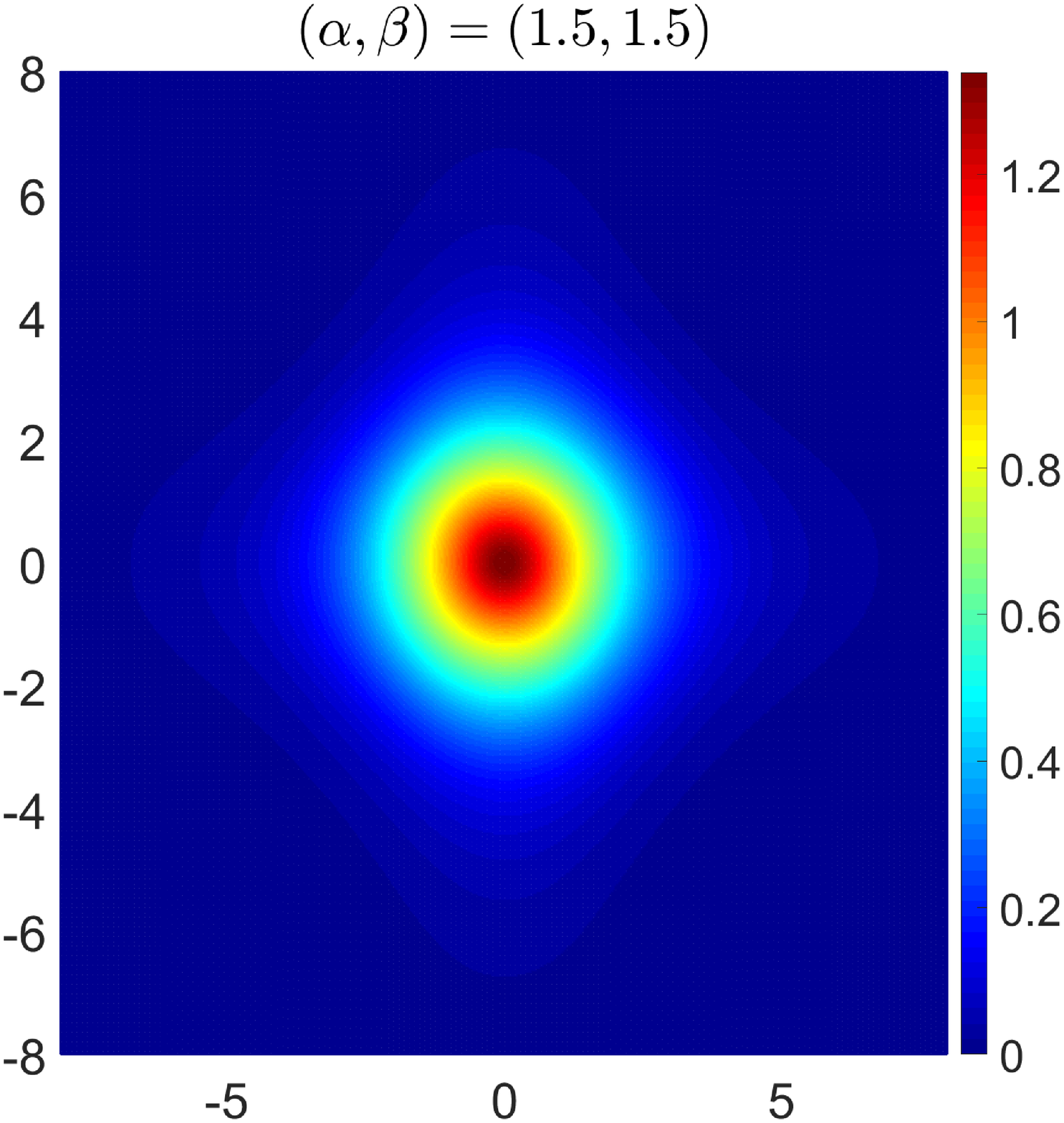}}
	\subfigure{\includegraphics[width=2.1in,height=2.0in]{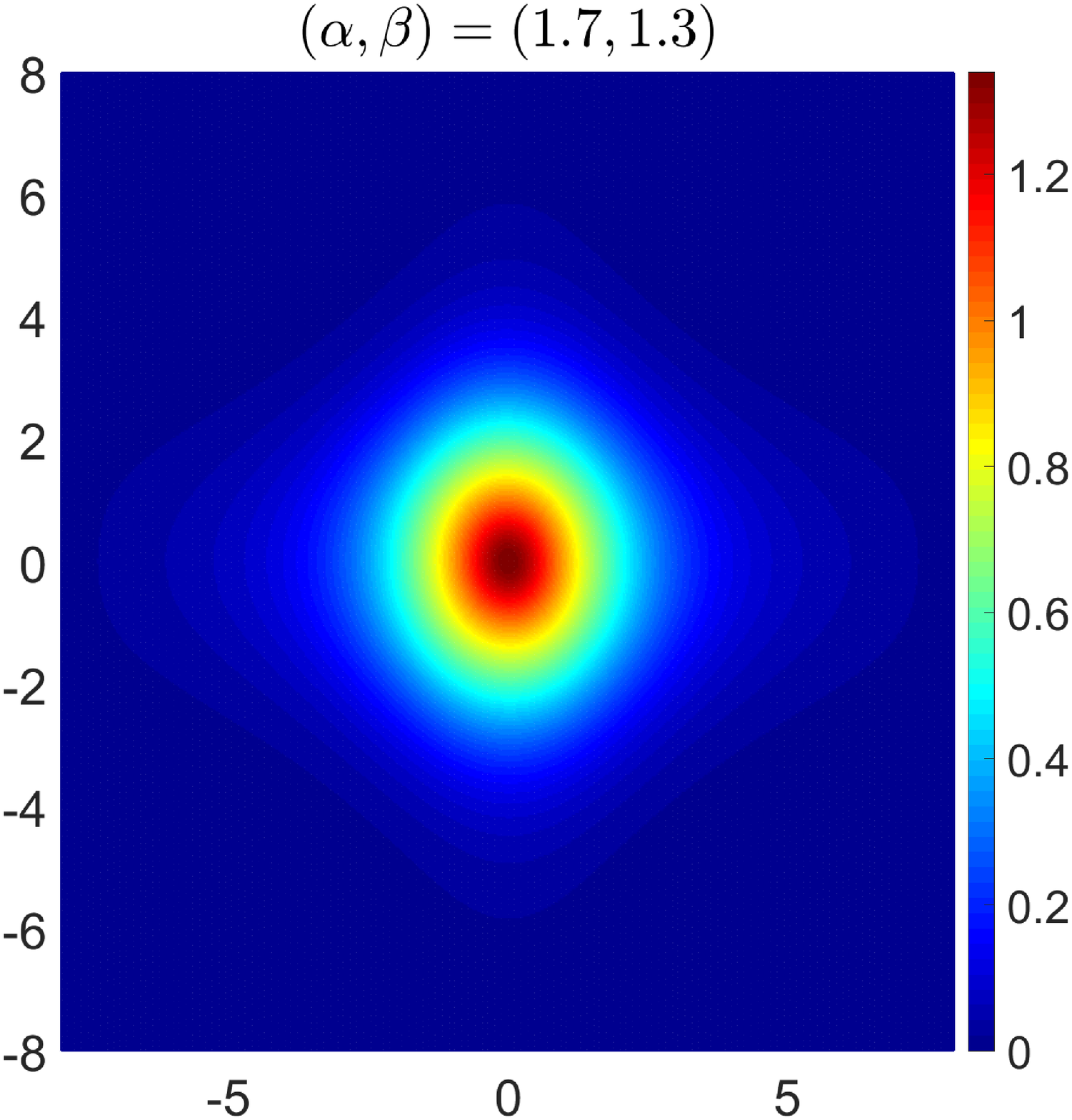}}
	\subfigure{\includegraphics[width=2.1in,height=2.0in]{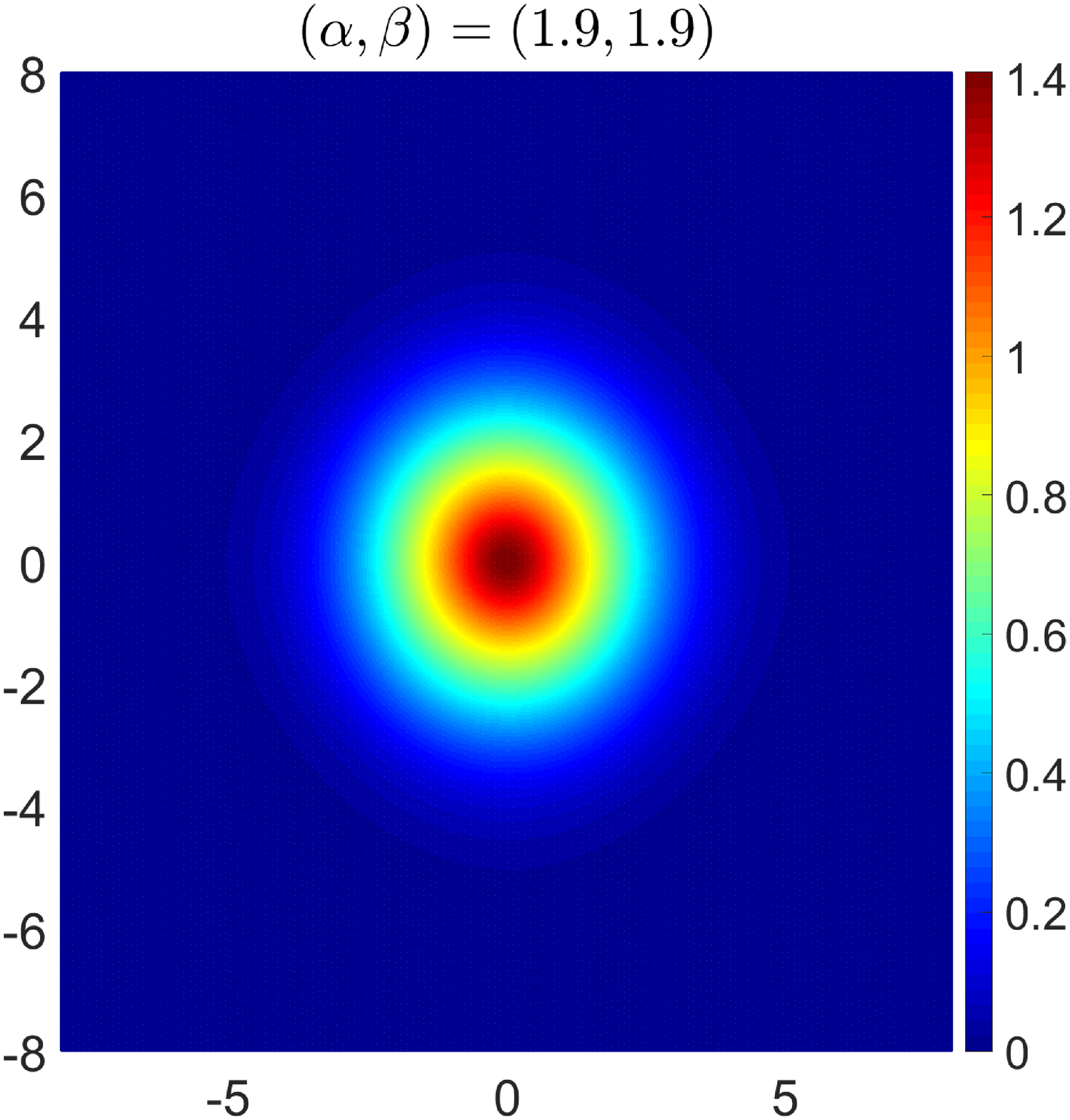}} \\
	\subfigure{\includegraphics[width=2.1in,height=2.0in]{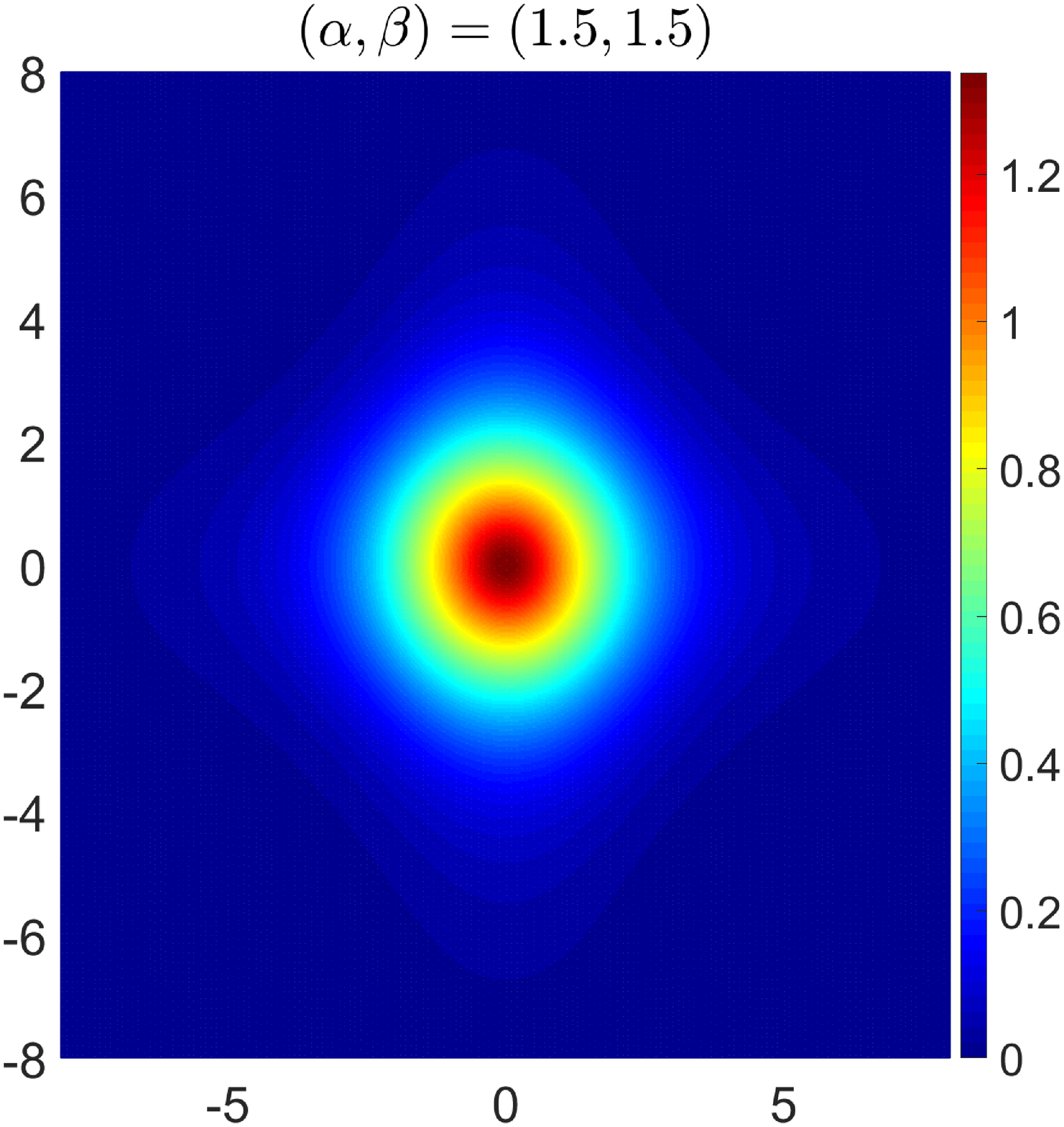}}
	\subfigure{\includegraphics[width=2.1in,height=2.0in]{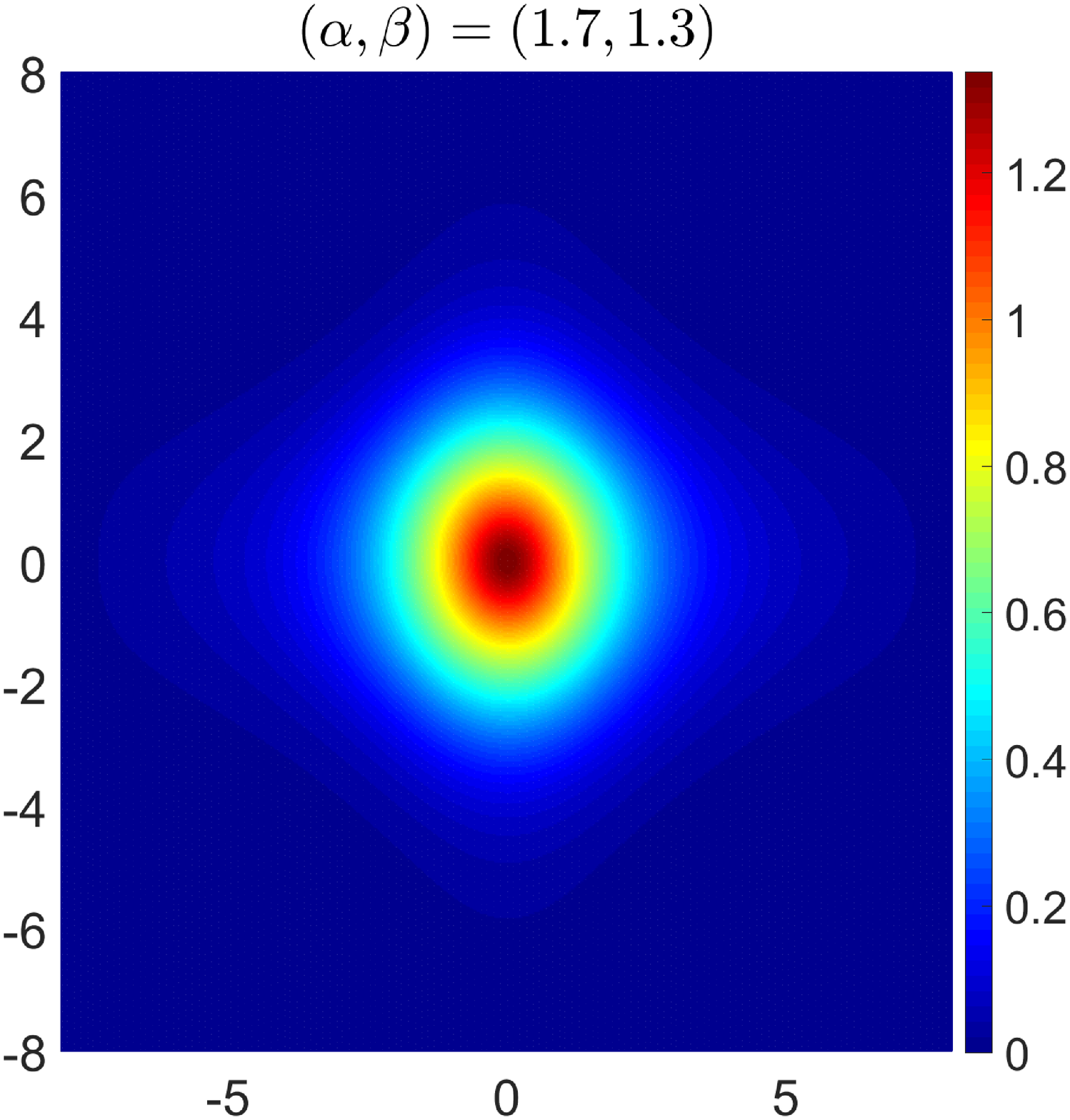}}
	\subfigure{\includegraphics[width=2.1in,height=2.0in]{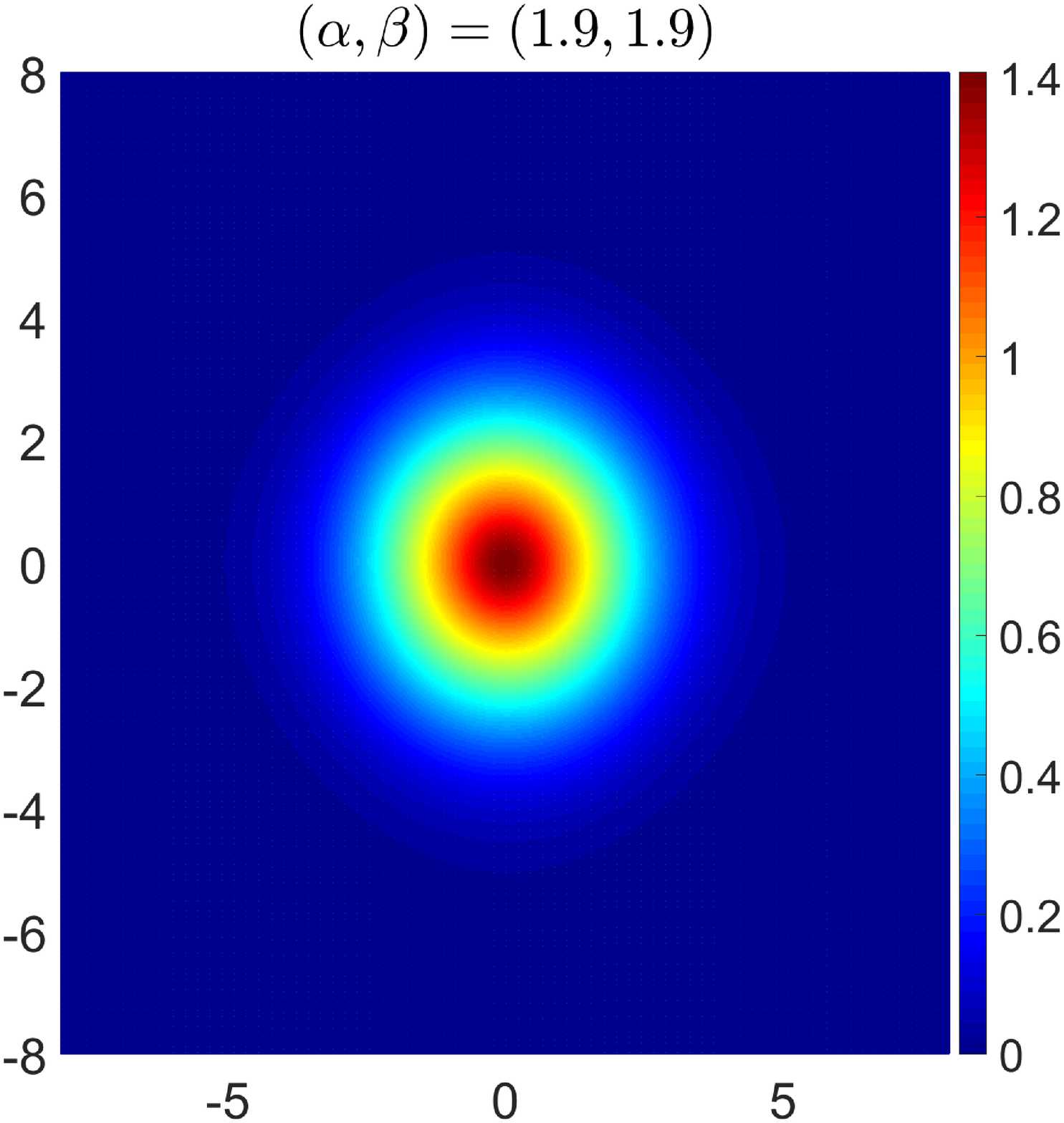}}			
	\caption{Comparison of the absolute values of the LBDF2 solution and our low-rank solution at $t = T$ for $(N, M) = (512, 200)$ 
		and different values of $\alpha$ and $\beta$ for Example 2. 
		Top row: The LBDF2 solution. Bottom row: The low-rank solution (rank $r = 8$).}
	\label{fig3}
\end{figure}

\noindent \textbf{Example 2.} In this example, in Eq.~\eqref{eq1.1}, 
we choose $\nu = \kappa = 1$, $\eta = 0.5$, $\xi = -5$, $\gamma = 3$,
$\Omega = [-8, 8] \times [-8, 8]$, $T = 1$ and the initial value 
$u_0(x,y) = e^{-2 \left( x ^2 + y^2 \right)} e^{\mathrm{\mathbf{i}} S_0}$,
where $S_0 = \frac{1}{e^{x + y} + e^{-(x + y)}}$.
The exact solution in this case is unknown. Similar to Example 1, we choose the LBDF2 solution with $(N, M) = (512, 10000)$ as the reference solution.

Table \ref{tab3} lists the errors and the observed temporal convergence orders.
From this table, we can see that for fixed $N = 512$ and $r \geq 6$, 
the observed temporal convergence order is $1$. 
In Table \ref{tab4}, for $r = 8$, the observed spatial convergence order is $2$.
In a word, these numerical results are consistent with the theoretical result in Section \ref{sec3.2}.
Fig.~\ref{fig3} compares the absolute values of the LBDF2 solution and the low-rank solution ($r = 8$) 
for different values of $\alpha$ and $\beta$.
Moreover, if the rank $r \geq 8$, the errors are only slightly different 
and the convergence order will not improve significantly.

\begin{landscape}
	\begin{table}[t]\scriptsize\tabcolsep=4.0pt
		\begin{center}
			\caption{Relative errors in Frobenius norm and observed temporal convergence orders for $N = 512$ for Example 2.}
			\centering
			\begin{tabular}{cccccccccccc}
				\hline
				& & \multicolumn{2}{c}{$r = 1$} & \multicolumn{2}{c}{$r = 2$} & \multicolumn{2}{c}{$r = 4$} 
				& \multicolumn{2}{c}{$r = 6$} & \multicolumn{2}{c}{$r = 8$} \\
				[-2pt] \cmidrule(lr){3-4} \cmidrule(lr){5-6} \cmidrule(lr){7-8} \cmidrule(lr){9-10} 
				\cmidrule(lr){11-12} \\ [-11pt]
				$(\alpha, \beta)$ & $M$ & $\textrm{relerr}(\tau,h)$ & $\textrm{rate}_\tau$ 
				& $\textrm{relerr}(\tau,h)$ & $\textrm{rate}_\tau$
				& $\textrm{relerr}(\tau,h)$ & $\textrm{rate}_\tau$ & $\textrm{relerr}(\tau,h)$ & $\textrm{rate}_\tau$
				& $\textrm{relerr}(\tau,h)$ & $\textrm{rate}_\tau$ \\
				\hline
				(1.2, 1.9) & 16 & 2.0955E-01 & -- & 2.0940E-01 & -- & 8.9892E-02 & -- & 8.9162E-02 & -- & 8.9135E-02 & -- \\
				& 64 & 1.9990E-01 & 0.0340 & 1.6160E-01 & 0.1869 & 2.5814E-02 & 0.9000 & 2.1950E-02 & 1.0111 & 2.1913E-02 & 1.0121 \\
				& 256 & 2.0063E-01 & -0.0026 & 1.5059E-01 & 0.0509 & 1.4928E-02 & 0.3951 & 5.5422E-03 & 0.9928 & 5.4473E-03 & 1.0041 \\
				& 1024 & 2.0100E-01 & -0.0013 & 1.4999E-01 & 0.0029 & 1.4018E-02 & 0.0454 & 1.6603E-03 & 0.8695 & 1.3588E-03 & 1.0016 \\
				\\
				(1.5, 1.5) & 16 & 2.0938E-01 & -- & 2.0924E-01 & -- & 8.8782E-02 & -- & 8.8147E-02 & -- & 8.8130E-02 & -- \\
				& 64 & 2.0016E-01 & 0.0325 & 1.7916E-01 & 0.1120 & 2.5573E-02 & 0.8978 & 2.1698E-02 & 1.0112 & 2.1669E-02 & 1.0120 \\
				& 256 & 2.0096E-01 & -0.0029 & 1.5866E-01 & 0.0877 & 1.4931E-02 & 0.3882 & 5.4758E-03 & 0.9932 & 5.3878E-03 & 1.0039 \\
				& 1024 & 2.0134E-01 & -0.0014 & 1.5945E-01 & -0.0036 & 1.4059E-02 & 0.0434 & 1.6403E-03 & 0.8696 & 1.3444E-03 & 1.0014 \\
				\\
				(1.7, 1.3) & 16 & 2.1149E-01 & -- & 2.1135E-01 & -- & 8.9162E-02 & -- & 8.8495E-02 & -- & 8.8470E-02 & -- \\
				& 64 & 2.0198E-01 & 0.0332 & 1.6305E-01 & 0.1872 & 2.5737E-02 & 0.8963 & 2.1780E-02 & 1.0113 & 2.1745E-02 & 1.0123 \\
				& 256 & 2.0270E-01 & -0.0026 & 1.5353E-01 & 0.0434 & 1.5076E-02 & 0.3858 & 5.4999E-03 & 0.9928 & 5.4059E-03 & 1.0040 \\
				& 1024 & 2.0306E-01 & -0.0013 & 1.5552E-01 & -0.0093 & 1.4197E-02 & 0.0433 & 1.6549E-03 & 0.8663 & 1.3484E-03 & 1.0016 \\
				\\
				(1.9, 1.2) & 16 & 2.0979E-01 & -- & 2.0964E-01 & -- & 8.9854E-02 & -- & 8.9167E-02 & -- & 8.9134E-02 & -- \\
				& 64 & 1.9999E-01 & 0.0345 & 1.5760E-01 & 0.2058 & 2.5825E-02 & 0.8994 & 2.1951E-02 & 1.0111 & 2.1913E-02 & 1.0121 \\
				& 256 & 2.0066E-01 & -0.0024 & 1.4966E-01 & 0.0373 & 1.4938E-02 & 0.3949 & 5.5426E-03 & 0.9928 & 5.4473E-03 & 1.0041 \\
				& 1024 & 2.0101E-01 & -0.0013 & 1.5024E-01 & -0.0028 & 1.4021E-02 & 0.0457 & 1.6605E-03 & 0.8695 & 1.3588E-03 & 1.0016 \\
				\hline
			\end{tabular}
			\label{tab3}
		\end{center}
	\end{table}
	\begin{table}[t]\scriptsize\tabcolsep=4.0pt
		\begin{center}
			\caption{Relative errors in Frobenius norm and observed spatial convergence orders 
				for $M = 10000$ for Example 2.}
			\centering
			\begin{tabular}{cccccccccccc}
				\hline
				& & \multicolumn{2}{c}{$r = 1$} & \multicolumn{2}{c}{$r = 2$} & \multicolumn{2}{c}{$r = 4$} 
				& \multicolumn{2}{c}{$r = 6$} & \multicolumn{2}{c}{$r = 8$} \\
				[-2pt] \cmidrule(lr){3-4} \cmidrule(lr){5-6} \cmidrule(lr){7-8} \cmidrule(lr){9-10} 
				\cmidrule(lr){11-12} \\ [-11pt]
				$(\alpha, \beta)$ & $N$ & $\textrm{relerr}(\tau,h)$ & $\textrm{rate}_h$ 
				& $\textrm{relerr}(\tau,h)$ & $\textrm{rate}_h$
				& $\textrm{relerr}(\tau,h)$ & $\textrm{rate}_h$ & $\textrm{relerr}(\tau,h)$ & $\textrm{rate}_h$
				& $\textrm{relerr}(\tau,h)$ & $\textrm{rate}_h$ \\
				\hline
				(1.2, 1.9) & 32 & 2.2126E-01 & -- & 1.8457E-01 & -- & 3.7148E-02 & -- & 3.4214E-02 & -- & 3.4212E-02 & -- \\
				& 64 & 2.0525E-01 & 0.1084 & 1.6523E-01 & 0.1597 & 1.6935E-02 & 1.1333 & 9.3369E-03 & 1.8736 & 9.2804E-03 & 1.8822 \\
				& 128 & 2.0204E-01 & 0.0227 & 1.6095E-01 & 0.0379 & 1.4196E-02 & 0.2545 & 2.4582E-03 & 1.9253 & 2.2641E-03 & 2.0352 \\
				& 256 & 2.0130E-01 & 0.0053 & 1.5971E-01 & 0.0112 & 1.3986E-02 & 0.0215 & 1.0737E-03 & 1.1950 & 5.2148E-04 & 2.1183 \\
				\\
				(1.5, 1.5) & 32 & 2.2202E-01 & -- & 1.8081E-01 & -- & 3.5474E-02 & -- & 3.2514E-02 & -- & 3.2513E-02 & -- \\
				& 64 & 2.0562E-01 & 0.1107 & 1.5709E-01 & 0.2029 & 1.6569E-02 & 1.0983 & 8.6606E-03 & 1.9085 & 8.6039E-03 & 1.9180 \\
				& 128 & 2.0239E-01 & 0.0228 & 1.6308E-01 & -0.0540 & 1.4206E-02 & 0.2220 & 2.3027E-03 & 1.9111 & 2.1004E-03 & 2.0343 \\
				& 256 & 2.0164E-01 & 0.0054 & 1.5177E-01 & 0.1037 & 1.4030E-02 & 0.0180 & 1.0497E-03 & 1.1333 & 4.8398E-04 & 2.1176 \\
				\\
				(1.7, 1.3) & 32 & 2.2395E-01 & -- & 1.9556E-01 & -- & 3.6112E-02 & -- & 3.3115E-02 & -- & 3.3112E-02 & -- \\
				& 64 & 2.0741E-01 & 0.1107 & 1.6221E-01 & 0.2697 & 1.6822E-02 & 1.1021 & 8.8906E-03 & 1.8971 & 8.8319E-03 & 1.9066 \\
				& 128 & 2.0412E-01 & 0.0231 & 1.6714E-01 & -0.0432 & 1.4352E-02 & 0.2291 & 2.3603E-03 & 1.9133 & 2.1549E-03 & 	2.0351 \\
				& 256 & 2.0337E-01 & 0.0053 & 1.5975E-01 & 0.0652 & 1.4167E-02 & 0.0187 & 1.0696E-03 & 1.1419 & 4.9634E-04 & 2.1182 \\
				\\
				(1.9, 1.2) & 32 & 2.2126E-01 & -- & 1.8671E-01 & -- & 3.7143E-02 & -- & 3.4214E-02 & -- & 3.4212E-02 & -- \\
				& 64 & 2.0525E-01 & 0.1084 & 1.7556E-01 & 0.0888 & 1.6936E-02 & 1.1330 & 9.3369E-03 & 1.8736 & 9.2804E-03 & 1.8822 \\
				& 128 & 2.0204E-01 & 0.0227 & 1.5858E-01 & 0.1468 & 1.4196E-02 & 0.2546 & 2.4582E-03 & 1.9253 & 2.2641E-03 & 2.0352 \\
				& 256 & 2.0130E-01 & 0.0053 & 1.6113E-01 & -0.0230 & 1.3986E-02 & 0.0215 & 1.0737E-03 & 1.1950 & 5.2148E-04 & 2.1183 \\
				\hline
			\end{tabular}
			\label{tab4}
		\end{center}
	\end{table}
\end{landscape}

\section{Concluding remarks}
\label{sec6}
In this paper, we propose a numerical integration method based on a dynamical low-rank approximation
for solving the 2D space fractional Ginzburg-Landau equations \eqref{eq1.1}.
First, we use a second-order difference method to approximate the two Riesz fractional derivatives.
From this, the matrix differential equation \eqref{eq2.1} is obtained.  
Next, we split Eq.~\eqref{eq2.1} into a stiff linear part and a nonstiff (nonlinear) part (see Eqs.~\eqref{eq2.2} and \eqref{eq2.3}),
and then apply a dynamical low-rank approach.
The convergence of our proposed method is studied in Section \ref{sec3}. 
Numerical examples strongly support the theoretical result which is given in Theorem \ref{th3.2}.
Based on this work, three future research directions are suggested:
\begin{itemize}
	\item[(i)] {Extension of our method to other problems such as space fractional Schr\"{o}dinger equations
		         \cite{zhao2014fourth};} 
	\item[(ii)] {For solving higher-dimensional version of \eqref{eq1.1}, we suggest considering
		         the dynamical tensor approximation proposed in \cite{koch2010dynamical, lubich2013dynamical};}  
	\item[(iii)] {Design some fast implementations (e.g., a parallel version) of our method.}              
\end{itemize}
\section*{Acknowledgments}
\addcontentsline{toc}{section}{Acknowledgments}
\label{sec7}

\textit{This research is supported by the National Natural Science Foundation
of China (No.~11801463), the Applied Basic Research Project of Sichuan Province
(No.~2020YJ0007) and the Fundamental Research Funds for the Central Universities (No.~JBK1902028).}

\section*{References}
\addcontentsline{toc}{section}{References}
\bibliography{references}

\end{document}